\crefname{assumption}{assumption}{assumptions}
\crefname{thm}{theorem}{theorems}
\crefname{lem}{lemma}{lemmas}
\crefname{cor}{corollary}{corollaries}
\crefname{prop}{proposition}{propositions}
\Crefname{theorem}{Theorem}{Theorems}
\newcommand{\R}{\mathbb{R}}
\newcommand{\eps}{\varepsilon}
\newcommand{\1}{\mathds{1}}
\let\phi\varphi
\DeclareMathOperator{\supp}{supp}
\newtheorem{thm}{Theorem}[section]
\newtheorem{prop}[thm]{Proposition}
\newtheorem{lem}[thm]{Lemma}
\newtheorem{lemma}[thm]{Lemma}
\newtheorem{theorem}[thm]{Theorem}
\newcommand{\Rm}{{\mathbb R}}
\newcommand{\farc}{\frac}
\newcommand{\sgn}{{\rm sgn}}
\newcommand{\unth}{\underline{\theta}}
\date\today
\author{Emeric Bouin 
\footnote{CEREMADE - Universit\'e Paris-Dauphine, UMR CNRS 7534, Place du Mar\'echal de Lattre de Tassigny, 75775 Paris Cedex 16, France. E-mail: \texttt{bouin@ceremade.dauphine.fr}}\and  
Christopher Henderson \footnote{Ecole Normale Sup\'erieure de Lyon, UMR CNRS 5669 'UMPA', 46 all\'ee d'Italie, F-69364~Lyon~cedex~07, France. E-mail: \texttt{christopher.henderson@ens-lyon.fr}}\and
Lenya Ryzhik \footnote{Department of Mathematics, Stanford University, Stanford, CA 94305, E-mail: \texttt{ryzhik@math.stanford.edu}}}
\begin{document}

\title{Super-linear spreading in local and non-local cane toads equations}
\maketitle

\begin{abstract}
In this paper, we show super-linear propagation in a nonlocal reaction-diffusion-mutation equation modeling the invasion of cane toads in Australia that has attracted attention recently from the mathematical point of view.  
The population of toads is structured by a phenotypical trait that governs the spatial diffusion.  
In this paper, we are concerned with the case when the diffusivity can take unbounded values, and we prove  
that the population spreads as $t^{3/2}$.  We also get the sharp rate of spreading in a related local model.  
\end{abstract}

\section{Introduction}

The invasion of cane toads in Australia has interesting features different from the standard spreading observed in 
most other
species. The experimental data~\cite{PhillipsBrownEtAl, Shine} show that the invasion speed has steadily increased 
during 
the eighty years since the toads were introduced in Australia.  In addition, the younger individuals at 
the edge of 
the invasion front have a significantly different morphology compared to other populations -- their legs 
tend to be on average much longer than away from the front. 
This is just one example of  a non-uniform space-trait 
distribution -- see, for instance, a study on the expansion of bush crickets in Britain~\cite{Thomas}.  Several works have addressed the  front 
invasions in ecology, where the trait is related to the
dispersal ability \cite{ArnoldDesvillettesPrevost,ChampagnatMeleard}. It has been observed
that selection of more mobile individuals can occur, even if they have no advantage 
in their reproductive rate, due to the spatial sorting 
\cite{Kokko,Ronce,PhillipsBrownEtAl,Simmons}.

In this paper, we focus on  the super-linear in time propagation  in a model of the cane toads invasion proposed in~\cite{BenichouEtAl},
based on the classical Fisher-KPP equation~\cite{Fisher,KPP}.  The population density is structured by a spatial variable, $x\in \R$, and a 
motility variable $\theta\in \Theta\stackrel{\rm def}{=}[\underline\theta, \infty)$, with a  fixed   $\underline\theta>0$. 
This population undergoes diffusion in the trait variable $\theta$, with a constant diffusion coefficient $\alpha>0$, representing 
mutation, and in the spatial variable, with the diffusion coefficient~$\theta$, representing the effect of the trait  on the spreading 
rates of the species. Thus, neglecting the competition and reproduction, the population model for the population density $u(t,x,\theta)$ would be
\begin{equation}\label{nov1002}
u_t=\theta u_{xx}+\alpha u_{\theta\theta}.
\end{equation}
In addition, each toad competes locally in space with all other individuals for resources. If the competition is local in the trait variable, 
has a saturation level $S$, and a growth rate $r$, then a Fisher-KPP type generalization of~(\ref{nov1002}) is
\begin{equation}\label{nov1004}
u_t=\theta u_{xx}+\alpha u_{\theta\theta}+ru(S-u).
\end{equation}
It is also natural to consider a non-local in trait competition (but still local in space), which leads to 
\begin{equation}\label{nov1006}
n_t=\theta n_{xx}+\alpha n_{\theta\theta}+rn(S-\rho),
\end{equation}
where
\begin{equation}\label{nov1008}
\rho(t,x)=\int_{\underline\theta}^\infty n(t,x,\theta)d\theta
\end{equation}
is the total population at the position $x$.
 
Both (\ref{nov1004}) and (\ref{nov1006}) are supplemented by the Neumann boundary condition at $\theta=\underline\theta$:
\begin{equation}\label{nov1012}
u_\theta(t,x,\underline\theta)=0,~~t>0,~x\in\Rm.
\end{equation}
The non-dimensional versions of (\ref{nov1004}) and (\ref{nov1006}) are, respectively,
\begin{equation}\label{nov1014}
u_t=\theta u_{xx}+ u_{\theta\theta}+u(1-u),~~~t>0,~x\in\Rm,~\theta\in\Theta,
\end{equation}
and 
\begin{equation}\label{nov1016}
n_t=\theta n_{xx}+ n_{\theta\theta}+n(1-\rho),~~~\rho(t,x)=\int_{\underline\theta}^\infty n(t,x,\theta)d\theta,
~~~~~t>0,~x\in\Rm,~\theta\in\Theta.
\end{equation}

In general, the speed of propagation in the Fisher-KPP type equations is determined by the linearization  around zero, 
that is, with the terms $u(1-u)$ in (\ref{nov1014}) and $n(1-\rho)$ in 
(\ref{nov1016}) replaced by $u$ and $n$, respectively.
Since the linearizations of~\eqref{nov1014} and~\eqref{nov1016} are identical, we expect both models to have the same propagation
speed.

Models involving non-local reaction terms have been the subject of intense study in recent years due to the complexity of the
dynamics -- see, for
example,~\cite{HamelRyzhik, FayeHolzer, NadinPerthameTang,NadinRossiRyzhikPerthame,AlfaroCovilleRaoul,BouinMirrahimi} 
and references therein.  
The cane toads equation has similarly attracted recent interest, mostly when the motility set $\Theta$ is a finite interval.
An Hamilton-Jacobi framework has been formally applied to the non-local model 
in~\cite{BCMetal},   and rigorously justified  in~\cite{Turanova}. 
In these works, the authors obtain the speed of propagation and the expected repartition of traits at the edge of the front by solving a spectral 
problem in the trait variable. The existence of traveling waves has been proved in
\cite{BouinCalvez}. 
The precise asymptotics of the front location for the Cauchy problem, 
up to a constant shift has been obtained in \cite{BHR_Delay} 
by using a Harnack-type inequality that allows one to compare 
the solutions of the non-local 
equation to those of a local equation, whose dynamics are well-understood~\cite{Bramson78}.   

As far as unbounded traits are concerned, a formal argument 
in~\cite{BCMetal} using a Hamilton-Jacobi framework predicted front acceleration and spreading rate of $O(t^{3/2})$.
In this paper, we give a rigorous proof of this spreading rate  both in the local and non-local models. This is  an addition to 
the growing list of ``accelerating fronts'' that have attracted some interest in recent years 
\cite{BouinCalvezNadin, HamelRoques, HendersonFast, Garnier, MeleardMirrahimi, CoulonRoquejoffre, CCR, CabreRoquejoffre}.

\subsubsection*{The local case}

Our first result concerns the local equation (\ref{nov1014}). 
%
\begin{theorem}[{\bf Acceleration in the local cane toads model}]
\label{thm:local_toads}
Let $u(t,x)$ be the solution of the local equation (\ref{nov1014}), with the boundary condition (\ref{nov1012}),
and the initial condition $u(0,x)=u_0(x)\ge 0$. Assume that  $u_0(x)$ is compactly supported, and
fix any constant $m \in (0,1)$, then
\begin{equation}\label{eq:local_asymptotics}
	\lim_{t\to\infty} \; \frac{\max\{ x \in \R : \exists \theta \in \Theta, u(t,x,\theta) = m\}}{t^{3/2}}
		= \dfrac{4 }{ 3   },
\end{equation}
The limit is uniform in $m\in[\eps,1-\eps]$, for any $\eps>0$ fixed.
\end{theorem}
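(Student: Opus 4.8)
The plan is to establish matching upper and lower bounds for the front location $X(t) := \max\{x : \exists \theta, u(t,x,\theta)=m\}$, both of the form $(\frac43 + o(1))t^{3/2}$. The heuristic guiding everything is the following trade-off: a particle that wishes to reach position $x$ at time $t$ should first mutate up to some large trait $\theta \sim t^{1/2}$ (costing time $\sim \theta^2/\alpha \sim t$ because mutation is a diffusion in $\theta$), and then diffuse in space at rate $\theta$, covering distance $\sqrt{\theta \cdot t} \sim t^{3/4}\cdot t^{1/2}$... more precisely one optimizes the joint cost. Concretely, along a trajectory that spends time at trait level $\theta(s)$, the spatial displacement is controlled by $\int_0^t \theta(s)\,ds$ while the trait excursion to level $\theta$ forces $\theta^2 \lesssim t$; optimizing $\int_0^t \theta(s)ds$ subject to $\theta(s) \lesssim \sqrt{s}$ and the Fisher-KPP growth budget (the exponential growth rate is $1$, so the usual "速度 $=2$" KPP balance must hold in an appropriate moving frame) yields the constant $4/3$. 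I would set this up rigorously via the Hamilton-Jacobi / logarithmic transform $u = e^{-\phi^\eps/\eps}$ under the hyperbolic scaling $(t,x,\theta) \mapsto (t/\eps, x/\eps^{3/2}, \theta/\eps^{1/2})$, which is exactly the scaling predicted in~\cite{BCMetal}; in the limit $\phi^\eps \to \phi$, where $\phi$ solves a variational (effective Hamiltonian) problem whose value at the front is computable, giving $4/3$.

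For the rigorous \textbf{upper bound}, I would avoid the full viscosity-solution machinery and instead build an explicit supersolution. Since the reaction term satisfies $u(1-u)\le u$, it suffices to bound the solution $\overline{u}$ of the linear equation $\overline{u}_t = \theta \overline{u}_{xx} + \overline{u}_{\theta\theta} + \overline{u}$ with the same (compactly supported, hence bounded) initial data and the Neumann condition. One can then either write a representation formula via the Feynman–Kac formula for the process $(X_s,\Theta_s)$ with $dX_s = \sqrt{2\Theta_s}\,dB_s$, $d\Theta_s = \sqrt{2}\,dW_s$ reflected at $\underline\theta$, and estimate $\mathbb{P}(X_t \ge c t^{3/2})$ by large deviations — noting $\Theta_s$ is a reflected Brownian motion so $\sup_{s\le t}\Theta_s$ is typically $O(\sqrt t)$ and exceeding $a\sqrt t$ costs $e^{-a^2/4}$, while conditionally $X_t$ is Gaussian with variance $2\int_0^t\Theta_s ds$ — or, more cleanly, guess an ansatz $\overline{u} = \exp(t - \lambda(t) x + \mu(t)\theta^2 + \dots)$ and choose the time-dependent parameters to make it a genuine supersolution above the level $m$ for $x \ge (\frac43+\delta)t^{3/2}$. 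I would carry out the latter: plug in, collect the conditions on $\lambda,\mu$, solve the resulting ODEs, and verify the exponent is negative in the claimed region. The optimization over the ansatz parameters is where the $4/3$ emerges.

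For the \textbf{lower bound}, I would construct a compactly-supported-in-$\theta$ subsolution that "surfs" along the optimal trajectory. The idea: fix a large trait window $\theta \in [\theta_0, 2\theta_0]$ with $\theta_0 = \beta \sqrt t$, on which the spatial diffusivity is $\gtrsim \theta_0$; inside this window the equation dominates the 1D Fisher-KPP equation $v_t = \theta_0 v_{xx} + v(1-v)$, which spreads at speed $2\sqrt{\theta_0}$, i.e. covers distance $\sim 2\sqrt{\beta}\,t^{1/2}\cdot t = 2\sqrt\beta\, t^{3/2}$ over time $t$ — but one must first pay to populate that window, which via the mutation diffusion takes time comparable to $\theta_0^2 = \beta^2 t$. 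Iterating this over a sequence of time intervals, or better, allowing $\theta_0$ to grow like $\sqrt t$ along the trajectory and tracking the accumulated displacement $\int_0^t 2\sqrt{\Theta(s)}\,ds$ with $\Theta(s)$ itself subject to having been reached by mutation by time $s$, one optimizes and again recovers $4/3$. Technically I would use the generalized principal eigenvalue / a moving Dirichlet box in $(x,\theta)$ whose $\theta$-extent and $x$-center evolve in time, and apply a comparison-to-KPP argument in the spirit of~\cite{BHR_Delay, Bramson78}: the known precise Bramson-type asymptotics on each slice, glued together. Establishing the limit is uniform in $m \in [\eps,1-\eps]$ is then routine, since the $o(t^{3/2})$ errors in both bounds do not depend on which level set $m$ one tracks (the solution's level sets are all within a bounded-in-$x$... in fact $o(t^{3/2})$... distance of each other by a Harnack-type argument).

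The \textbf{main obstacle} I anticipate is the lower bound, specifically the coupling between "having enough mass at a high trait" and "that mass then propagating in space": the naive two-step argument (mutate, then spread) loses a constant, and getting the sharp $4/3$ requires handling the genuinely two-dimensional, time-inhomogeneous optimization — the subsolution must simultaneously climb in $\theta$ and translate in $x$ at carefully matched rates, and the Fisher-KPP saturation ($n \le 1$, or $\rho \le 1$ in the nonlocal case) must not choke off the growth along the way. Getting the Neumann boundary at $\underline\theta$ and the unboundedness of $\theta$ to cooperate — in particular ruling out that even faster spreading occurs via some other trajectory — is handled by the upper bound's supersolution, so the two halves genuinely meet only if the ansatz optimizations on both sides produce the same constant, which is the computation I would check most carefully.
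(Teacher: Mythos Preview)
Your overall architecture --- explicit supersolution to the linearized equation for the upper bound, and a subsolution living on a moving Dirichlet domain along an optimal trajectory for the lower bound --- is exactly the paper's strategy. But your heuristic and your proposed scaling are wrong in a way that would prevent the lower bound from reaching the sharp constant. You repeatedly write $\theta \sim t^{1/2}$ at the front and propose the hyperbolic rescaling $\theta \mapsto \theta/\eps^{1/2}$; the correct scaling (the one in \cite{BCMetal} and in this paper) is $\theta \mapsto \theta/\eps$, and the optimal trait at the edge is $\theta_{\rm edge}(t)=t$, not $\sqrt t$. Your own trait-window computation already signals the problem: with $\theta_0=\beta\sqrt t$ the spatial KPP speed is $2\sqrt{\theta_0}\sim t^{1/4}$, hence displacement $\sim t^{5/4}$, not the $t^{3/2}$ you wrote. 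The paper's lower-bound trajectories come directly from the Hamilton--Jacobi optimal paths, namely $\Theta_T(s)\approx s(2-s/T)$ reaching $\Theta_T(T)\approx T$, and it is the integral $\int_0^T 2\sqrt{\Theta_T(s)}\,ds$ along \emph{these} paths (equivalently, the Lagrangian bound $\dot X^2/(4\Theta)+\dot\Theta^2/4\le 1$) that produces $4/3$. If you run your moving-box argument along $\theta\sim\sqrt t$ trajectories you will get a strictly smaller constant.

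On the upper bound, ``guess an exponential ansatz and solve ODEs'' understates what is needed. The paper's supersolution is $\exp\{t-\psi(t,x,\theta)\}$ with $\psi$ the fundamental solution of $\psi_t+\theta\psi_x^2+\psi_\theta^2=0$, and the fact that this remains a supersolution \emph{with} the diffusion present relies on the identity $\theta\psi_{xx}+\psi_{\theta\theta}\ge 0$, which boils down to a cancellation $\theta Z_{xx}+Z_{\theta\theta}=0$ for the cubic root $Z$ solving $Z^3+3\theta Z+3x=0$; a generic quadratic-exponential ansatz will not give you this for free. Moreover you do not address the Neumann condition at $\theta=\underline\theta$: the Hamilton--Jacobi supersolution violates $\psi_\theta(\cdot,\cdot,\underline\theta)\ge 0$ for $|x|>\tfrac{4}{3}\underline\theta^{3/2}$, and the paper has to patch it there by freezing $\theta$ at the minimizer $\theta_*(x)=(3|x|/4)^{2/3}$ and separately checking the resulting function is still a supersolution in that region.
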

  
The assumption that $u_0$ is compactly supported is made purely for convenience, one could allow 
more general   rapidly decaying or front-like initial conditions.
The proof of Theorem \ref{thm:local_toads} is in two steps.  First, we show that the Hamilton-Jacobi framework 
provides a super-solution to~\eqref{nov1014},  and gives  the upper bound of the limit in~\eqref{eq:local_asymptotics}
-- see Proposition \ref{prop:upperbound}.  Second, we prove the lower bound to the limit in~\eqref{eq:local_asymptotics} 
by constructing a sub-solution to $u$. This is done in Proposition~\ref{prop:lower_bound}.  The argument involves building 
a sub-solution to~\eqref{eq:local_asymptotics} on a traveling ball with the Dirichlet boundary conditions, whose path comes 
from the Hamilton-Jacobi framework discussed above.  These sub-solutions are arbitrarily small initially but become 
bounded uniformly away from zero on any compact subset of the traveling ball for large time.  The analysis is 
complicated by the fact that the diffusivity is unbounded  
in the $\theta$ direction.   It is crucial to match
the upper bound that we use the optimal paths coming from the Hamilton-Jacobi framework. 


\subsubsection*{The non-local case}

Our second main result shows that the full non-local model~\eqref{nov1016} exhibits a similar front acceleration.
\begin{theorem}[{\bf Acceleration in the non-local cane toads model}]
\label{thm:nonlocal_toads}
Let $n(t,x)$ be the solution of the non-local cane toads equation (\ref{nov1016}), with the Neumann boundary condition (\ref{nov1012}),
and the initial condition $n(0,x)=n_0(x)\ge 0$. Assume that  $n_0(x)$ is compactly supported, and
fix any $\eps > 0$.  
There exists a positive constant $c_\eps$, depending only on $\eps$, such that
\begin{equation}\label{eq:nonlocal_lower}
	\frac{8}{3\sqrt{3\sqrt{3}}} (1-\eps) \leq \limsup_{t\to\infty} \frac{\max\left\{x \in \R: \rho(t,x) \geq c_\eps \right\}}{t^{3/2}}.
\end{equation}
In addition, if $m$ is any constant in $(0,1)$, then we have that
\begin{equation}\label{eq:nonlocal_upper}
\limsup_{t\to\infty} \frac{ \max\{ x \in \R : \rho(t,x) \geq m\}}{t^{ 3/2}} \leq  \dfrac{4 }{ 3 }.
\end{equation}
\end{theorem}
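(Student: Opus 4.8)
The plan is to treat the two bounds separately, since the non-local term plays opposite roles in each. For the upper bound \eqref{eq:nonlocal_upper}, the key observation is that $n$ is a sub-solution of the linearized local equation: since $\rho \ge 0$, we have $n_t \le \theta n_{xx} + n_{\theta\theta} + n$, so $n$ is dominated by the solution $\bar u$ of the linear problem $\bar u_t = \theta \bar u_{xx} + \bar u_{\theta\theta} + \bar u$ with the same compactly supported initial data (extended suitably), subject to the Neumann condition \eqref{nov1012}. But this linear equation is exactly the linearization underlying the local model \eqref{nov1014}, so the Hamilton-Jacobi super-solution built for Proposition \ref{prop:upperbound} applies verbatim and gives spreading no faster than $\tfrac{4}{3}t^{3/2}$ for $\bar u$, hence for $n$, and integrating in $\theta$ gives the same bound for $\rho$. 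Concretely, I would reuse the WKB ansatz $\bar u \sim \exp(-\phi(t,x,\theta)/\eps)$-type super-solution from the local analysis, whose zero set expands like $\tfrac{4}{3}t^{3/2}$; the only thing to check is that the bound survives the $\theta$-integration, which follows because the super-solution decays in $\theta$ fast enough to be integrable (this is already part of what Proposition \ref{prop:upperbound} establishes). So \eqref{eq:nonlocal_upper} is essentially a corollary of the local upper bound.

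For the lower bound \eqref{eq:nonlocal_lower}, the difficulty is that the saturation term $n(1-\rho)$ can be strongly negative where the population has already built up, so one cannot simply quote the local sub-solution of Proposition \ref{prop:lower_bound}. The strategy is the classical one for non-local Fisher-KPP problems: first obtain an a priori $L^\infty$ bound $\rho(t,x) \le M$ for some $M$ (this comes from comparison with an ODE or from a standard parabolic estimate on $\rho$, using that $n \ge 0$ and the equation for $\rho$ integrates the diffusion terms), and then note that $n$ is a super-solution of the \emph{local} equation with a reduced reaction rate: $n_t \ge \theta n_{xx} + n_{\theta\theta} + n(1 - M\,?\,)$ — more precisely, wherever $\rho \le \kappa$ for a small threshold $\kappa$, $n$ dominates the dynamics of a local KPP-type equation with growth rate $1-\kappa$. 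I would then run the traveling-ball sub-solution construction of Proposition \ref{prop:lower_bound}, but with the reaction rate $1$ replaced by $1-\kappa$; since the Hamilton-Jacobi optimal paths scale, the resulting spreading constant degrades from $\tfrac{4}{3}$ by a factor depending on $\kappa$, and one must check this matches the stated $\tfrac{8}{3\sqrt{3\sqrt 3}}(1-\eps)$ in the limit $\kappa \to 0$. The bookkeeping is: the factor $\tfrac{8}{3\sqrt{3\sqrt3}} = \tfrac{8}{3}\cdot 3^{-3/4} \approx 1.17$ is smaller than $\tfrac{4}{3}\approx 1.33$, reflecting exactly the loss from having to keep $\rho$ below threshold along the moving ball rather than only at a single level set.

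The main obstacle is the lower bound, and within it the genuinely new point is controlling the non-local term along the accelerating path. The traveling-ball sub-solution of Proposition \ref{prop:lower_bound} is a function supported on a ball moving like $t^{3/2}$ in $x$ and spreading in $\theta$; to legitimately use the reduced reaction rate $1-\kappa$ one needs to know that $\rho$ does not exceed $\kappa$ on (a neighborhood of) this ball. This requires a separate argument: ahead of the front $\rho$ is automatically small because nothing has arrived yet, but one must quantify this — e.g. by a bootstrap showing that if $\rho \le \kappa$ on the ball up to time $t$ then the sub-solution stays below the level where it would push $\rho$ past $\kappa$, closing the loop. I expect this to require choosing the sub-solution's amplitude carefully (small enough that even after the KPP-type growth it contributes less than $\kappa$ to $\rho$ locally, using that $\rho$ is an integral over $\theta$ and the sub-solution is concentrated in a bounded $\theta$-window) and invoking a Harnack or maximum-principle comparison to control $\rho$ just ahead of the ball. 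The remaining steps — the a priori bound on $\rho$, the reduction to a local equation, and the Hamilton-Jacobi computation of the degraded constant — are routine given the local case, so the write-up should emphasize the non-local control and the constant-matching.
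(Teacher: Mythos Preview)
Your treatment of the upper bound \eqref{eq:nonlocal_upper} is correct and is exactly what the paper does: $n$ is a sub-solution of the linearized equation, so Proposition~\ref{prop:upperbound} applies directly, including the $\theta$-integration needed to control $\rho$.

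For the lower bound, your general instinct---reduce to a local equation with growth rate $1-\kappa$ in a region where $\rho\le\kappa$, then run a moving-ball sub-solution---is the right one, and is what the paper does. But two points in your plan would not work as written.

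First, the paper does not attempt the direct bootstrap you describe (``choose the amplitude so small that the sub-solution itself contributes less than $\kappa$ to $\rho$, then close the loop''). That circularity is hard to break. Instead the argument is cleanly \emph{by contradiction}: assume \eqref{eq:nonlocal_lower} fails, so that for all large $t$ one has $\rho(t,x)<\gamma\eps$ throughout the region $\{x\ge c_1 t^{3/2}\}$. This hypothesis hands you the smallness of $\rho$ for free in that entire half-space, with no bootstrap needed; one then builds a sub-solution supported there which forces $\rho$ above $\gamma\eps$, contradicting the assumption. This is also why only a $\limsup$ appears in \eqref{eq:nonlocal_lower}.

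Second, and more seriously, you misidentify the source of the constant $\tfrac{8}{3\sqrt{3\sqrt3}}$. It does \emph{not} come from replacing the growth rate $1$ by $1-\kappa$; that replacement only produces the factor $(1-2\gamma\eps)^{3/4}$, which tends to $1$ and accounts for the $(1-\eps)$ in the statement. The genuine loss from $\tfrac43$ down to $\tfrac{8}{3\sqrt{3\sqrt3}}$ is geometric: the moving ball must remain entirely inside $\{x\ge c_1 t^{3/2}\}$ for the contradiction hypothesis to apply, but the optimal Hamilton-Jacobi trajectories \eqref{dec708} used in the local lower bound move almost purely in $\theta$ at early times (since $\dot X_T(0)=0$), so the ball would immediately drift left of the barrier. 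One is forced to use a trajectory of the form $X_T(t)=c\,t^{3/2}$ from the outset, paired with $\Theta_T(t)\sim \tfrac{2}{\sqrt3}\,t$, and maximizing $c$ subject to the Lagrangian constraint \eqref{dec406} along this restricted family yields exactly $c=\tfrac{8}{3\sqrt{3\sqrt3}}$. Your computation ``check this matches in the limit $\kappa\to 0$'' would give $\tfrac43$ and leave you puzzled; the missing ingredient is this trajectory constraint.
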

In contrast to the sharp bound provided by Theorem~\ref{thm:local_toads}, Theorem~\ref{thm:nonlocal_toads}  
does not have matching lower and upper bounds.  This is due to the lack of a comparison principle for~\eqref{nov1016}.  
The proof of the lower bound involves arguing by contradiction in order to   compare~\eqref{nov1016} 
to a solution of the local, linear equation defined on a moving ball with the Dirichlet boundary conditions. 
 This causes the $\limsup$ to appear in~\eqref{eq:nonlocal_lower}, 
as opposed to the more desirable $\liminf$.  In addition, because the optimal Hamilton-Jacobi trajectories
trajectories are initially almost purely in the $\theta$ direction, we can not use them to move the ``Dirichlet ball", and
are forced to use non-optimal trajectories, leading to the sub-optimal 
constant $8/(3\sqrt{3\sqrt3})$ in (\ref{eq:nonlocal_lower}).  We comment on this in 
Section~\ref{ss:nonlocal_lower}.  We believe
that the sharp result would have the   lower bound matching 
the upper bound~\eqref{eq:nonlocal_upper}.  The proof of the upper bound is given by 
Proposition~\ref{prop:upperbound} below, as in the local case, while the proof of the lower bound and an explicit bound on $c_\eps$
are given by Proposition~\ref{prop:nonlocal_lowerbound}.   
We also note that in general non-local Fisher-KPP type equations the stability of the steady state~$u\equiv 1$ may 
fail~\cite{BNPR,GenieysVolpertAuger,Gourley}.  Thus, it is not surprising that we are restricted to working with 
the level sets of certain heights $c_\eps$ in~\eqref{eq:nonlocal_lower} -- this
mirrors the propagation results in~\cite{HamelRyzhik}.  


We should mention the concurrent work by 
Berestycki, Mouhot, and Raoul~\cite{BerestyckiMouhotRaoul}, who  
use a mix of probabilistic and analytic methods to 
prove the same sharp result in the local model~\eqref{nov1014}.  
In addition, they 
prove the sharp asymptotics in a 
non-local model where~$\rho$ is replaced by a windowed non-local.
Ecologically, the windowed term models a situation where individuals compete for 
resources only with individuals of a similar trait.
The techniques of~\cite{BerestyckiMouhotRaoul} 
do not seem to apply to the full non-local model that we address here.  
%


The rest of the paper is organized as follows. In Section \ref{sec:HJ} we recall  some facts from \cite{BCMetal} on 
the Hamilton-Jacobi framework. Then we derive in Section \ref{sec:upper} the upper bound 
common to Theorems~\ref{thm:local_toads} and~\ref{thm:nonlocal_toads} using an explicit super-solution 
that arises from the work in Section \ref{sec:HJ}.  The lower bound is then proved in Section~\ref{sec:lower}.  There, 
we first derive a general propagation result on the linearized equation by using the optimal paths from Section~\ref{sec:HJ}, 
re-scaling the equation 
appropriately, and using precise spectral estimates.  We then use this result to obtain the lower bounds for the local and non-local models.
Section~\ref{sec:trajectories} contains the proofs of some auxiliary results.

{\bf Acknowledgments.}  The authors wish to thank Vincent Calvez and Sepideh Mirrahimi for fruitful discussions and earlier computations on this problem. LR was supported by NSF grant DMS-1311903. EB was supported by ``INRIA Programme Explorateur" and is very grateful to Stanford University for its sunny hospitality during the second semester of the academic year 2014-2015. CH acknowledges the support of the Ecole Normale Sup\'erieure de Lyon for a one week visit in April 2015.  Part of this work was performed within the framework of the LABEX MILYON (ANR-10-LABX-0070) of Universit\'e de Lyon, within the program ``Investissements d'Avenir" (ANR-11-IDEX-0007) operated by the French National Research Agency (ANR).

\section{The Hamilton-Jacobi solutions}\label{sec:HJ}

In this section, we recall how a suitable scaling limit of the cane toads equation can formally give the acceleration rate~\cite{BCMetal}. 
The analysis of this section will be used in the rest of the paper 
to construct ``good" sub- and super-solutions to  the local and local equations \eqref{nov1014} and \eqref{nov1016}. We will focus on the linearized
equation
\begin{equation}\label{nov1302}
u_t=\theta u_{xx}+u_{\theta\theta}+u.
\end{equation}
Writing 
\[
u(t,x)=e^t v(t,x),
\]
we reduce it to
\begin{equation}\label{nov1304}
v_t=\theta v_{xx}+v_{\theta\theta}.
\end{equation}
One obvious solution to this equation is
\[
v(t,\theta)=\frac{1}{\sqrt{4\pi t}}e^{-\theta^2/(4t)},
\]
which also provides a spatially uniform super-solution to (\ref{nov1302}):
\begin{equation}\label{nov1306}
\bar v(t,\theta)=e^{t-\theta^2/(4t)}.
\end{equation}
However, the function $\bar v(t,\theta)$ has no decay in $x$, and is not 
useful in the spatial regions where we expect the solution of the full 
cane toads equation to be small. 

In order to construct another super-solution to (\ref{nov1304}), with decay in
space,
we  rescale  \eqref{nov1304} setting 
\[
v^\eps(t,x,\theta)=v \left(  \frac{t}{\eps}, \frac{x}{\eps^{3/2}}, \frac{\theta}{\eps} \right).
\]
The function $u^\eps(t,x,\theta)$ satisfies
\begin{equation*} \label{eq:scaling 3ter}
\eps \partial_t u^\eps  = \eps ^2\theta \partial^2_{xx} u^\eps + \eps^2 \partial^2_{\theta\theta} u^\eps.
\end{equation*}
We perform the \textit{Hopf-Cole transformation} 
\begin{equation*}
u^\eps=\exp\{-\varphi^\eps/\eps\},
\end{equation*} 
so that
\begin{equation}\label{nov1316}
\partial_t \varphi^\eps + \theta|\varphi^\eps_x|^2 
+ |\varphi^\eps_\theta|^2=\eps\theta\varphi^\eps_{xx}
+\eps\varphi^\eps_{\theta\theta},
\end{equation}
and obtain, in the formal limit as $\eps\to 0$, the Hamilton-Jacobi equation
\begin{equation}\label{eq:HJ}
\partial_t \psi + \theta|\psi_x|^2 + |\psi_\theta|^2=0.
\end{equation}
We will  use the solutions of this Hamilton-Jacobi
equation to construct  
sub- and super-solutions to the original problem. 
The Hopf-Cole transformation is an effective tool in the analysis
of front propagation for reaction-diffusion equations -- see, for instance,
\cite{LE.PS:89,WF.PS:86,GB.LE.PS:90}, including 
parabolic integro-differential equations modeling populations structured by a phenotypical trait (see \cite{OD.PJ.SM.BP:05,GB.BP:08,BouinMirrahimi}).

\subsubsection*{A ``heat kernel'' solution of the Hamilton-Jacobi equation}

Let us consider the Hamilton-Jacobi equation (\ref{eq:HJ}) 
with the initial condition
\begin{equation}\label{nov1022}
\psi(t=0,x,\theta) = 
\begin{cases} 0 & \text{if $(x,\theta) = (0,0)$} \\
+ \infty & \text{if $(x,\theta) \neq  (0,0)$}.
\end{cases}
\end{equation}
The reason for this choice of the initial data is clear:
for the standard heat equation, the Hamilton-Jacobi equation would be
\begin{equation}\label{nov1314}
\psi_t+|\psi_x|^2=0,
\end{equation}
and the solution is simply
\begin{equation}\label{nov1310}
\psi(t,x)=\farc{x^2}{4t},
\end{equation}
leading to the standard heat kernel.
The Hamiltonian for~\eqref{eq:HJ} is 
\begin{equation}\label{nov1312}
H((x,\theta),(p_x,p_\theta)) = \theta |p_x|^2 + |p_\theta|^2,
\end{equation}
and the corresponding Lagrangian is
\begin{equation}\label{dec402}
L((x,\theta),(v_x,v_\theta)) = \frac{v_x^2}{4\theta} + \frac{v_\theta^2}{4}.
\end{equation}
Using the Lax-Oleinik formula to solve~\eqref{eq:HJ}, we get
\begin{equation}
\psi  (t, x , \theta)
= \inf_{w\in C^1} \left \lbrace \int_{0}^{t} L(w(s), \dot w(s)) ds \; \Big\vert \; w(0) = ( x , \theta ) , w(t) = (0,0) \right \rbrace.
\end{equation}
The  optimal trajectory given by the Hamiltonian flow is the solution of
\[
\frac{dX(s)}{ds}=2P_x(s)\theta(s),~~~\frac{d\theta(s)}{ds}=2P_\theta(s),~~~\frac{dP_x(s)}{ds}=0,~~~\farc{dP_\theta(s)}{ds}=-P_x(s)^2.
\]
hence
\[
\frac{d}{ds}\Big(\farc{1}{\theta(s)}\farc{dX(s)}{ds}\Big)=0,~~
\farc{d^2\theta(s)}{ds^2}=-\farc{1}{2}\Big(\farc{1}{\theta(s)}\farc{dX(s)}{ds}\Big)^2.
\]

Thus, there is a constant $C$ such that
\begin{equation}\label{eq:odes}
\dot X (s) = C \theta(s), \qquad \ddot \theta(s) = - \frac{C^2}{2},~~X(0)=x,~\theta(0)=\theta,~~X(t)=0,~\theta(t)=0.
\end{equation} 
Plugging this into the expression for $\psi$ gives
\begin{equation}\label{eq:psi_computation}
\begin{split}
\psi(t, x, \theta)
&= \int_{0}^{t} L(w(s), \dot w(s)) ds
=  \int_{0}^{t} \frac14 \left( \frac{\dot X(s)^2}{\theta(s)} + \dot \theta(s)^2 \right) ds \\
& = \frac{C}{4} \displaystyle\int_{0}^{t} \dot X(s) ds + 
\farc{t\dot\theta(t)^2}{4} -\farc{1}{2}\int_0^t s\dot\theta(s)\ddot\theta (s)ds
 =  - \frac{Cx}{4} +  \frac{t  \dot \theta(t) ^2}{4}  	+ 
 \farc{C^2}{4} \displaystyle \int_{0}^{t} s \dot \theta(s) ds\\
& = - \frac{Cx}{4} +  \frac{t \dot \theta(t)^2}{4 } 
-  \farc{C^2}{4} \displaystyle \int_{0}^{t} \theta(s) ds	   
= - \frac{Cx}{4} + \frac{t \dot \theta(t)^2}{4} +  \farc{C x}{4} 
= \frac{t\dot \theta(t)^2}{4}.
\end{split}
\end{equation}
We now compute $\dot \theta(t)$. From~\eqref{eq:odes}, 
we find that
\begin{equation}\label{eq:theta_trajectory}
	\theta(s) = - \frac{C^2}{4} s^2 + \left( \frac{C^2t}{4} - \frac{\theta}{t} \right) s + \theta,
\end{equation}
which implies that
\begin{equation}\label{eq:theta_dot}
	\dot \theta(t) = - \frac{\theta}{t} - \frac{C^2 }{4} t = - \frac{1}{t} \left( \theta + \frac{C^2 t^2}{4}\right).
\end{equation}
To obtain a closed form for $\psi$, we need to find $C$.  
We use~\eqref{eq:odes} to obtain
\[
- x= \int_0^t \dot X(s) ds= C \int_0^t \theta(s) ds
= C\left( -\frac{C^2 t^3}{12} + 
\left(\frac{C^2t}{4} - \frac{\theta}{t}\right)\frac{t^2}{2} + \theta t\right)
= \frac{(Ct)^3}{24} + \frac{\theta (Ct)}{2}.
\]
It follows that $Z = Ct/2$ is the unique real solution of the cubic equation
\begin{equation}\label{W}
Z^3 + 3\theta Z +  3x  = 0.
\end{equation}

Combining~\eqref{eq:psi_computation},~\eqref{eq:theta_dot}, and~\eqref{W}, 
we obtain an explicit formula for $\psi(t,x)$:
\begin{equation}\label{eq:psi}
\psi ( t , x,\theta) = \frac{1}{4 t} 
\left( \theta + {Z(x,\theta)^2}  \right)^2,
\end{equation}
the analog of (\ref{nov1310}) for our problem.

\subsubsection*{Super-solutions with the diffusion}

The function $\psi(t,x)$ was obtained neglecting the right side in (\ref{nov1316}).
It turns out, that, when the diffusion is taken into account, it still
leads to a super-solution to the linearized cane toads equation
\begin{equation}\label{nov1130}
u_t-\theta u_{xx}-u_{\theta\theta}-u\ge 0,
\end{equation}
of the form
\begin{equation}\label{nov1132}
\bar u(t,x,\theta)=\exp\{t-\psi(t,x,\theta)\},
\end{equation}
or, more explicitly showing the difference with (\ref{nov1306}):
\begin{equation}\label{nov1320}
\bar u(t,x,\theta)=\exp\left\{t-\frac{1}{4t}(\theta+Z^2(x,\theta))^2\right\}.
\end{equation}
This function satisfies
\begin{equation}\label{nov1134}
\bar u_{t}-\theta \bar u_{xx}-\bar u_{\theta\theta}-\bar u =
\bar u\big(-{\psi}_{t}+\theta\psi_{xx}-
\theta|{\psi}_x|^2
+\psi_{\theta\theta}-|{\psi}_{\theta}|^2\big)=\bar u\big(\theta\psi_{xx}+
\psi_{\theta\theta}\big).
\end{equation}
Observe that  
\begin{equation}\label{nov1136}
\begin{split}
&4t\big( \theta\psi_{xx}+\psi_{\theta\theta}\big)  
 = 4\theta(( \theta + {Z^2})Z Z_x)_{x} + 
2( ( \theta + {Z^2})(1 +{2} Z Z_\theta))_{\theta} \\
& = 4( \theta + {Z^2})( \theta (Z Z_x)_{x} +  (Z Z_\theta)_{\theta}) + 
8{ \theta }( Z Z_x)^2 + 2(1 + {2} Z Z_\theta)^2
\ge  4( \theta + {Z^2}) 
Z \left( \theta  Z_{xx} + Z_{\theta \theta}\right) .
\end{split}
\end{equation}
We claim that, somewhat miraculously, 
\begin{equation}\label{nov1140}
\theta Z_{xx} + Z_{\theta\theta} = 0.
\end{equation}
Indeed, we have
\begin{equation}\label{nov1326}
Z_\theta = -\frac{Z}{ Z^2 +\theta},  ~~ Z_x = -\frac{1}{ Z^2 + \theta},
\end{equation}
so that
\[
Z_{\theta\theta}
= -\frac{Z_\theta ( Z^2 + \theta) - Z (2  Z Z_\theta + 1 )}{( Z^2 +  \theta)^2}
= \frac{Z ( Z^2 + \theta)+\left(\theta - Z^2 \right) Z }
{( Z^2 + \theta)^3}
= \frac{2Z \theta}{( Z^2 + \theta)^3},
\]
and
\[
Z_{xx} = -\frac{2 Z}{( Z^2 + \theta)^3}.
\]
Thus, (\ref{nov1140}) holds and, as a consequence, (\ref{nov1130}) follows.

\subsubsection*{A level set of the super-solution and the optimal trajectories}

The level set $\{\bar u(t,x,\theta)=1\}$ is given by
\begin{equation}\label{nullset}
	\theta + {Z(x,\theta)^2}= 2t.
\end{equation}
Multiplying this equation by $Z$, and using \eqref{W}, we get 
\begin{equation}
3 \theta Z + 3 x
		= - \left( 2t - \theta \right) Z,
\end{equation}
or
\begin{equation}\label{eq:Z}
Z(x,\theta) = -\frac{3x}{2(\theta + t)}. 
\end{equation}
Inserting this back into \eqref{nullset}, gives the equation for the level
set $\{\bar u(t,x,\theta)=1\}$:
\begin{equation}\label{nullset2}
x^2 = \frac{4}{9} \left( 2t - \theta \right) \left( \theta + t \right)^2.
\end{equation}
In order to compute the rightmost point of this level set at a given
time $t>0$, we
differentiate~(\ref{nullset2}) in~$\theta$.
The critical points are determined by
\[
0=2x\frac{\partial x}{\partial\theta}
= \frac{4}{9} \left[ -(\theta+t)^2 + 2(2t -\theta) (\theta+t)\right]
= \frac{4}{3} (t+\theta)(t-\theta),
\]
and the maximum
\begin{equation}\label{nov1026}
x_{\rm edge}(t)=  \dfrac{4 }{ 3   } t^{3/2}\,
\end{equation}
is attained at $\theta_{\rm edge}(t)=t$.

With the maximal end-points in hand, we now compute the Lagrangian trajectories
$X(s)$,~$\theta(s)$ which travel to the 
far edge $(x_{\rm edge}(t),\theta_{\rm edge}(t))$.  
Using the general form~\eqref{eq:theta_trajectory} with the 
choice $C = 2Z/t$ for $\theta(s)$, we obtain
\begin{equation*}
\theta(s)
= -{Z(x_{\rm edge}(t),\theta_{\rm edge}(t))^2}\left(\frac{s}{t}\right)^2 + 
({Z(x_{\rm edge}(t),\theta_{\rm edge}(t))^2} - \theta_{\rm edge})\frac{s}{t} 
+ \theta_{\rm edge}(t).
\end{equation*}
Then using the expressions $x_{\rm edge}(t) = 4 t^{3/2}/3$ and $\theta_{\rm edge}(t) = t$ along with the expression~\eqref{eq:Z} for~$Z$, we obtain
\begin{equation}\label{nov1028}
\theta(s)
= - \frac{9x_{\rm edge}(t)^2}{16t^2} 
\left(\frac{s}{t}\right)^2 
+ \Big(\frac{9x_{\rm edge}(t)^2}{16t^2} - t\Big)\frac{s}{t} + t 
= t\Big(1 - \left(\frac{s}{t}\right)^2\Big).
\end{equation}
We similarly deduce the trajectory for $X(s)$.  Indeed, we have from~\eqref{eq:odes} and the definition of $Z$ that
\[
X(s) = x_{\rm edge}(t) + \frac{2Z}{t} \int_0^s \theta(r) dr.
\]
Using expression (\ref{nov1026}) for
$x_{\rm edge}(t)$  and (\ref{nov1028}), we get 
\begin{eqnarray*}
&&X(s)= \frac{4}{3}t^{3/2} - 
\frac{1}{t}\left(\frac{3x_{\rm edge}}{\theta_{\rm edge} + t}\right) 
\int_0^s t \left( 1 - \left(\frac{r}{t}\right)^2\right) dr
= \frac{4}{3}t^{3/2} - 2 t^{1/2} \left( s - \frac{s^3}{3t^2}\right)\\
&&~~~~~~= \frac{4}{3} t^{3/2} \left( 1 - \frac{3s}{2t} 
\left(1 - \frac{s^2}{3t^2}\right)\right).
\end{eqnarray*}
To obtain the forward trajectories, we reverse time, that is, 
change variables from $s \mapsto t -s$, and with a slight abuse of
notation, write
\begin{equation}\label{dec706}
X(s) = \left(\frac{3}{2} - \frac{s}{2t}\right) 
\left(\frac{s}{t}\right)^2 \frac{4}{3} t^{3/2},
~~~\text{ and }~~~\theta(s) = s \left(2 - \frac{s}{t}\right).
\end{equation}

\subsubsection*{The minimum of $\psi(t,x,\theta)$}

In the sequel, we also need the minimum $\theta_*(t,x)$
of $\psi(t,x,\theta)$ in $\theta\in\Theta$,
for $t$ and $x$ fixed.
We differentiate expression~(\ref{eq:psi}) for $\psi(t,x,\theta)$  
with respect to $\theta$:
\begin{equation}\label{nov1144}
\psi_\theta = \frac{1}{2 t} \left( \theta + Z^2 \right) 
\left( 1 + {2} Z Z_\theta \right).
\end{equation}
Hence, the critical points satisfy
\begin{equation}\label{nov1102}
Z Z_\theta = - \farc 12.
\end{equation}

Using expression (\ref{nov1326}) for $Z_\theta$   leads to
(note that $Z$ has the opposite sign of $x$)
\begin{equation*}
Z=-\sqrt{\theta_*(x)}\sgn(x).
\end{equation*}
We insert this value into~\eqref{W}, and find
\begin{equation}\label{nov1106}
 \theta_*(x) = \left( \frac{3}{4} |x| \right)^{2/3},
\end{equation}
and
\begin{equation}\label{nov1107}
\psi(t,x,\theta_*(x)) = \frac{1}{t} \left( \frac{3}{4} |x| \right)^{4/3}.
\end{equation}
Note that the internal minimum exists only if $|x|$ is sufficiently large:
\begin{equation}\label{nov1154}
|x|>r_c \stackrel{\rm def}{=} \farc{4}{3}\unth^{3/2},
\end{equation}
so that
$\theta_*(x)\ge\underline\theta$, otherwise the minimum of $\psi(t,x,\theta)$ 
is attained at $\theta=\underline\theta$.

\begin{figure}[h]
\begin{center}
\includegraphics[width = 0.65\linewidth]{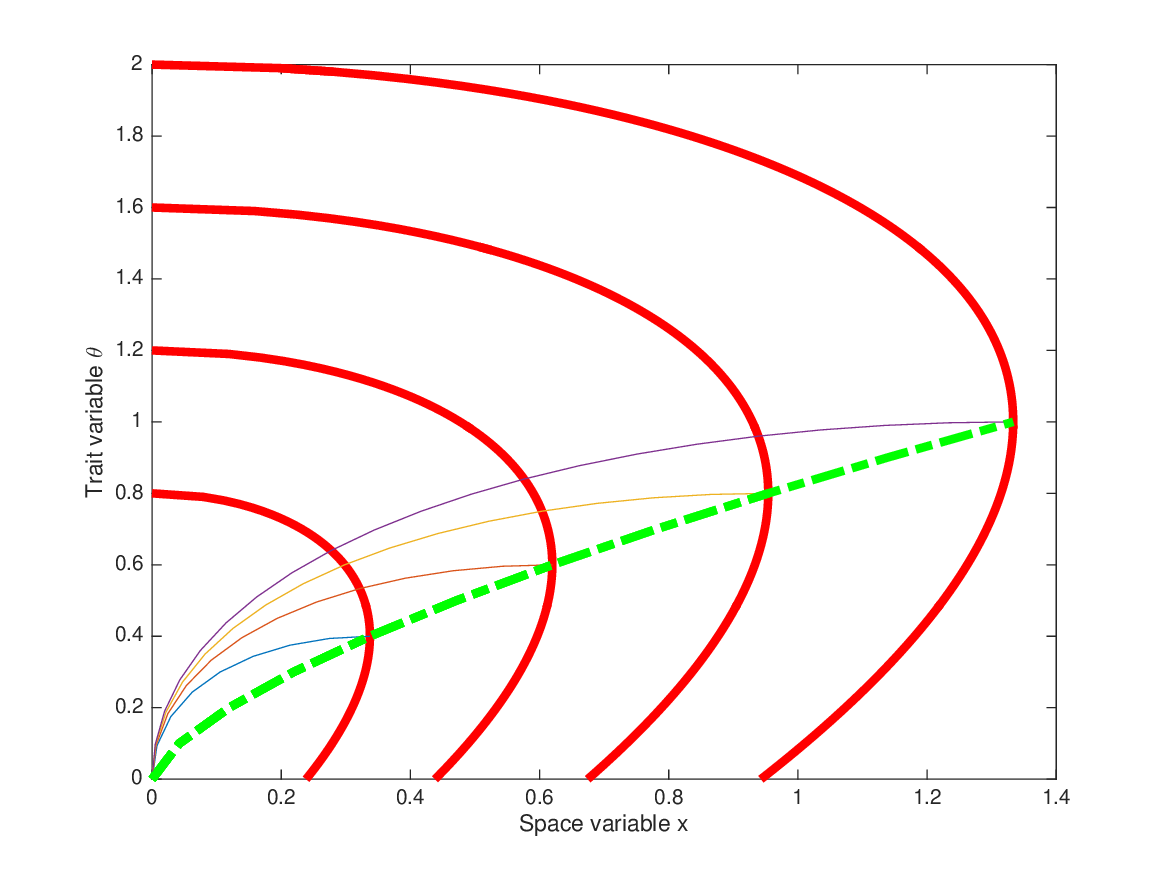}
\end{center}
\caption{The level set $\{\bar u(t,x,\theta)=1\}$ in the phase space $(x, \theta)$ for different values of time 
and the optimal trajectory, see also \cite{BCMetal}. We also plot the trajectories that lead to the edge of the front for various values of time. These latter trajectories are crucial for the paper.}\label{f:level_sets}
\end{figure}

\section{An upper bound }\label{sec:upper}

In this section,     
we construct an explicit super-solution for the local and nonlocal versions of the 
cane toads equation. This provides an upper bound on the spreading rate.

\subsection*{A super-solution}

Ideally, we would take as a super-solution the function 
\[
u(t,x,\theta)=\bar u(t+a,x,\theta),
\]
with $\bar u$ as in (\ref{nov1132}) with some 
suitably chosen $a>0$. 
There is an obstacle: a super-solution, in addition to
(\ref{nov1130}), should satisfy the boundary condition
\begin{equation}\label{nov1148}
u_\theta(t,x,\underline\theta)\le 0.
\end{equation}
For a function of the form (\ref{nov1132}), this condition is equivalent to
\begin{equation}\label{nov1150}
\psi_\theta(t,x,\underline\theta)\ge 0.
\end{equation}
The function $\psi(t,x,\theta)$ has a single minimum, 
either at $\theta=\unth$, if~$|x|<r_{c}$, or at $\theta=\theta_*(x)$ if~$|x|>r_c$, where we recall $r_c$ from~\eqref{nov1154}.
Hence, (\ref{nov1150}) can not hold for $|x|>r_c$, and 
we need to modify~$u(t,x,\theta)$ to turn it
into a true super-solution.
 
To this end, we recall the other family of
super-solutions,~$\bar v(t,\theta)$ given
by (\ref{nov1306}):
\begin{equation}\label{nov1306bis}
\bar v(t,\theta)=e^{t-\theta^2/(4t)},
\end{equation}
that do satisfy the boundary condition (\ref{nov1148}). As, on the other
hand, $\bar v(t,\theta)$ has no decay in $x$, we will only use it for $x<0$.
Let us define the set
\begin{equation}\label{nov1161}
\Omega = \left\lbrace (x,\theta):~x\ge r_c,
\hbox{ and } 
\underline{\theta}\leq \theta \leq \theta_*(x) \right\rbrace.
\end{equation}
We define our super-solution first on $\{x \leq 0\}$ and on $\{x\geq 0\}\cap \Omega^c$, but we extend it to all of $\R\times \Omega$ below.  First, we define
\begin{equation}\label{nov1122}
\tilde u(t,x,\theta)=\begin{cases} C(a)\bar v(t+a,\theta),~~~~~&x\le 0,\\
C(a)\bar u(t+a,x,\theta), &x\ge0,~~(x,\theta)\in\Omega^c,
\end{cases}
\end{equation}
with the constant $C(a)$ to be chosen later.
Note that we have
\begin{equation}\label{nov1330}
\bar v(t+a,\theta)\ge\bar u(t+a,x,\theta),
\end{equation}
with the equality holding only at $x=0$, where we recall that $Z(t,x=0,\theta)=0$.
Hence, the function~$\tilde u(t,x,\theta)$ is $C^1$ in $\Omega^c$.
 
It is a super-solution in the sense that
\begin{eqnarray}\label{nov1162}
&&\tilde u_t-\theta\tilde u_{xx}-\tilde u_{\theta\theta}-
\tilde u\ge 0\hbox{ in $\Omega^c$},\\
&&\bar u_\theta(t,x,\unth)\le 0\hbox{ for all $x\le r_c$.}\nonumber
\end{eqnarray} 

In order to extend $\tilde u$ into $\Omega$ as a $C^1$-supersolution we simply set
\begin{equation}\label{nov1164}
\tilde u(t,x,\theta)=C(a)
\bar u(t,x,\theta_*(x)),~~\hbox{ for $(x,\theta)\in\Omega$,}
\end{equation}
or, more explicitly
\begin{equation}\label{nov1166}
\tilde u(t,x,\theta)= C(a)
\exp\Big\{t+a -\farc{1}{t+a}\Big(\frac{3x}{4}\Big)^{4/3}\Big\},~~
\hbox{ for $(x,\theta)\in\Omega$}.
\end{equation}
\begin{figure}[t]
\centering
	\begin{overpic}[width = 1. \linewidth]{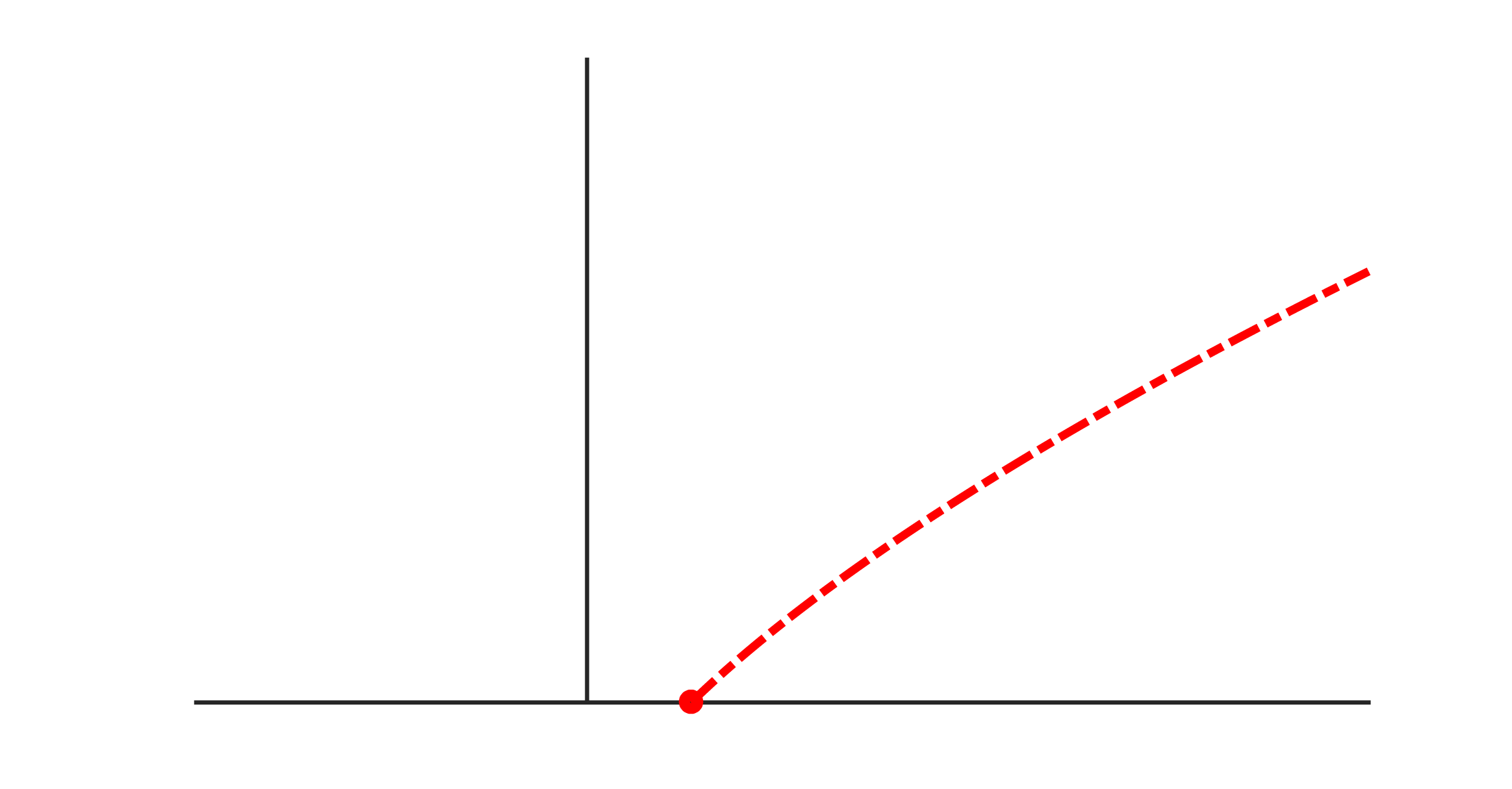}
		\put(10,30){$C(a) \exp\left\{t + a - \frac{\theta^2}{4(t+a)}\right\}$}
		\put(45,35){$C(a) \exp\left\{t+a - \frac{( \theta + Z(x,\theta)^2)^2}{4(t+a)}\right\}$}
		\put(60,12){$C(a) \exp\left\{t + a - \frac{1}{t+a}\left(\frac{3x}{4}\right)^{4/3}\right\}$}
		\put(84,36){\color[rgb]{1.0,0,0}$\theta = \theta_*(x)$}
		\put(90,3){$x$}
		\put(36,47){$\theta$}
	\end{overpic}
\caption{Definition of the super-solution in the three domains $\{x < 0\}$, $\Omega$ and $\Omega^c\cap\{x \geq 0\}$.  The red line gives the curve $(x,\theta_*(x))$.}\end{figure}

We need to verify that the extended function is $C^1$ and that it satisfies (\ref{nov1162}). Before we verify this, we point out that in order to apply the maximum principle, we only require that $\tilde u$ is $C^1$ and is $C^2$ except on a smooth 1D set.  As such, we need not worry that $\tilde u$ is likely not $C^2$ on $\partial \Omega$.  The boundary condition
in (\ref{nov1162}) is automatic since $\tilde u$ does not depend on 
the variable $\theta$ in $\Omega$.  
As $\theta_*(x)$ is the minimum of $\psi(t,x,\theta)$ in $\theta$, 
we have $\tilde u_\theta=0$ on both sides of
the curve 
\[
\Gamma=\partial\Omega=\{(x,\theta_*(x)),~x\ge r_c\}.
\]
It is easy to see that the $x$-derivatives of $\tilde u$ 
match across $\Gamma$ for the same reason, and the extended function is
$C^1$. We now compute in $\Omega$:
\begin{equation}\label{nov1168}
\begin{split}
\tilde u_t-\theta\tilde u_{xx}-\tilde u_{\theta\theta}-\tilde u
&=\tilde u\Big[
\farc{1}{(t+a)^2}\Big(\frac{3x}{4}\Big)^{4/3}+1-
\farc{\theta}{(t+a)^2}\Big(\farc{3x}{4}\Big)^{2/3}
+\farc{\theta}{t+a}\Big(\frac{3}{2x}\Big)^{2/3}-1\Big] \\
&\ge \farc{\tilde u}{(t+a)^2}\Big(\farc{3x}{4}\Big)^{2/3}\Big[\Big(\farc{3x}{4}\Big)^{2/3}-\theta\Big]=
\farc{\tilde u}{(t+a)^2}\Big(\farc{3x}{4}\Big)^{2/3}(\theta_*(x)-\theta)
\ge 0,\nonumber
\end{split}
\end{equation}

as $\theta\le\theta_*(x)$ in $\Omega$. Thus, 
the function $\tilde u(t,x,\theta)$ is a super-solution 
in the sense of (\ref{nov1162})
in the whole domain $(x,\theta)\in\Rm\times\Theta$:
\begin{eqnarray}\label{nov1202}
&&\tilde u_t-\theta\tilde u_{xx}-
\tilde u_{\theta\theta}-\tilde u\ge 0
\hbox{ for all $x\in\Rm$ and $\theta\in\Theta$},\\
&&\tilde u_\theta(t,x,\unth)\ge 0\hbox{ for all $x\in\Rm$.}\nonumber
\end{eqnarray} 

\subsection*{An upper bound}

We now use the above super-solution to give an upper bound for
the speed of propagation for the local and non-local cane toads equations.
\begin{prop}\label{prop:upperbound}
Let $u_0 \in L^\infty(\R \times \Theta)$ be a non-zero, 
non-negative function such that 
\[
u_0 \leq \1_{[-\infty, x_r]\times[\underline\theta, \theta_u]}
\]
for some $x_r \in \R$, $\theta_u > \underline\theta$. Fix any $m>0$.  Let $u$ be a solution of either 
the local cane toads equation~\eqref{nov1014} or the non-local 
cane toads equation \eqref{nov1016}. Then the following upper bounds hold
\begin{equation*}
\begin{split}
	&\lim_{t\to\infty} \frac{ \max\{ x \in \R : \exists \theta \in \Theta, u(t,x,\theta) \geq m\}}{t^{3/2}} \leq  \dfrac{4 }{ 3 }, ~~\text{ and}\\
	&	\lim_{t\to\infty} \frac{ \max\{ x \in \R :\rho(t,x,\theta) \geq m\}}{t^{3/2}} \leq  \dfrac{4 }{ 3 }.
\end{split}
\end{equation*}
\end{prop}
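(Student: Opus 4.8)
The plan is to use the super-solution $\tilde u$ constructed above as an actual barrier from above for the solution of the linearized (and hence the full) equation, and then to read off the spreading rate from the decay of $\tilde u$.

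\textbf{Step 1: Reduction to the linearized equation.}
For the local equation \eqref{nov1014}, the nonlinearity satisfies $u(1-u)\le u$, so any solution of \eqref{nov1014} is a sub-solution of the linearized equation \eqref{nov1302}. For the non-local equation \eqref{nov1016}, since $\rho\ge 0$ we likewise have $n(1-\rho)\le n$, so $n$ is a sub-solution of \eqref{nov1302} as well, and consequently $\rho(t,x)=\int_{\underline\theta}^\infty n(t,x,\theta)\,d\theta$ inherits the corresponding bound. Thus in both cases it suffices to bound from above a non-negative sub-solution $u$ of \eqref{nov1130} with compactly supported (indeed $L^\infty$ and compactly supported) initial data, subject to the Neumann condition \eqref{nov1012}.

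\textbf{Step 2: Comparison at $t=0$ and application of the maximum principle.}
Fix $a>0$. By \eqref{nov1202}, $\tilde u(t,x,\theta)=C(a)\exp\{t+a-\psi(t+a,x,\theta)\}$ (suitably interpreted in the three regions) is a genuine super-solution of \eqref{nov1130} satisfying $\tilde u_\theta(t,x,\underline\theta)\ge 0$, i.e. $\tilde u$ respects the Neumann boundary. We need $u_0\le \tilde u(0,\cdot,\cdot)$. Since $u_0$ is bounded and supported in $[-\infty,x_r]\times[\underline\theta,\theta_u]$, while $\tilde u(0,x,\theta)=C(a)\exp\{a-\psi(a,x,\theta)\}$ is bounded below by a positive constant on that compact set (for fixed $a$), we may choose $C(a)$ large enough that $u_0\le \tilde u(0,\cdot,\cdot)$ everywhere. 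The parabolic comparison principle on $\R\times\Theta$ with the Neumann condition at $\theta=\underline\theta$ then gives $u(t,x,\theta)\le \tilde u(t,x,\theta)$ for all $t>0$. One should be slightly careful about the comparison principle at infinity in $\theta$ (the coefficient $\theta$ is unbounded) and at the interface $\Gamma=\partial\Omega$; but $\tilde u$ is $C^1$ across $\Gamma$ and a super-solution in the distributional sense there, and the growth of $\tilde u$ in $\theta$ is Gaussian-type decay, so a standard Phragmén–Lindelöf type argument applies.

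\textbf{Step 3: Extracting the spreading rate.}
Now use the explicit formula for $\tilde u$. Away from $\Omega$ and for $x\ge 0$, $\tilde u=C(a)\exp\{t+a-\psi(t+a,x,\theta)\}$ with $\psi(t+a,x,\theta)=\frac{1}{4(t+a)}(\theta+Z^2)^2$, which is minimized over $\theta$ either at $\theta=\underline\theta$ or, for $|x|>r_c$, at $\theta_*(x)=(3|x|/4)^{2/3}$ with $\psi(t+a,x,\theta_*(x))=\frac{1}{t+a}(3|x|/4)^{4/3}$; on $\Omega$, $\tilde u$ equals exactly this minimal value by \eqref{nov1166}. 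Hence for all $(x,\theta)$ with $x$ large,
\[
\tilde u(t,x,\theta)\le C(a)\exp\Big\{t+a-\frac{1}{t+a}\Big(\frac{3|x|}{4}\Big)^{4/3}\Big\}.
\]
If $u(t,x,\theta)\ge m$ for some $\theta$, then the right-hand side is $\ge m$, which forces $\frac{1}{t+a}(3x/4)^{4/3}\le t+a+a'+\log(C(a)/m)$ for a constant $a'$, i.e. $x\le \frac43(t+a)^{3/2}(1+o(1))^{3/4}$. Dividing by $t^{3/2}$ and sending $t\to\infty$ yields $\limsup_{t\to\infty} t^{-3/2}\max\{x:\exists\theta,\ u(t,x,\theta)\ge m\}\le 4/3$. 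The same bound for $\rho$ follows because $\rho(t,x)\ge m$ together with $\rho(t,x)\le \int_{\underline\theta}^\infty \tilde u(t,x,\theta)\,d\theta$, and the $\theta$-integral of the Gaussian-type super-solution contributes only a polynomial-in-$t$ prefactor that is swallowed by the $t^{3/2}$ scaling; concretely one estimates $\int \tilde u\,d\theta \le C(a)(t+a)^{1/2}\exp\{t+a-\frac{1}{t+a}(3x/4)^{4/3}\}$ using the explicit Gaussian in $\theta$ on $\{x<0\}$ and the decay of $\psi$ in $\theta$ elsewhere.

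\textbf{Main obstacle.}
The computational heart (verifying $\tilde u$ is a super-solution, including the miraculous identity \eqref{nov1140}) has already been done in Section~\ref{sec:HJ}. The remaining delicate point is the \emph{justification of the comparison principle} on the unbounded domain $\R\times[\underline\theta,\infty)$ with the unbounded diffusivity $\theta$ and the piecewise-defined (merely $C^1$, not $C^2$) super-solution across $\Gamma$: one must ensure no mass ``escapes to $\theta=+\infty$'' and that the kink along $\Gamma$ does not violate the comparison (it does not, since $\tilde u$ is the pointwise minimum of a smooth super-solution family in that region, hence still a viscosity/distributional super-solution). This is where I would spend the most care, likely by truncating $\theta\le R$, comparing there with a correction term handling the artificial boundary at $\theta=R$, and letting $R\to\infty$ using the Gaussian decay.
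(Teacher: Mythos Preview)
Your proposal is correct and follows essentially the same route as the paper: both reduce to the linearized inequality, compare with the super-solution $\tilde u$, and read off the level-set location from the explicit minimum $\psi(t,x,\theta_*(x))=\frac{1}{t}(3|x|/4)^{4/3}$. The only notable difference is in the $\rho$ estimate: the paper chooses $\tilde m=m/(10(t+a))$, locates the corresponding $x_{m,t}$, and splits the $\theta$-integral at $5\theta_*(x_{m,t})$ to get a clean bound $\le m$, whereas you absorb a polynomial-in-$t$ prefactor into the exponential; both arguments give the same conclusion. One small correction: your claimed prefactor $(t+a)^{1/2}$ is not right, since on $\Omega$ the super-solution is \emph{constant} in $\theta$ over an interval of length $\theta_*(x)-\underline\theta\sim t$ (at the edge $x\sim \tfrac{4}{3}t^{3/2}$), so the prefactor is $O(t)$ rather than $O(t^{1/2})$ --- still polynomial, so your argument goes through unchanged.
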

\begin{proof}[{\bf Proof}]
Whether $u$ solves~\eqref{nov1014} or~\eqref{nov1016},  it satisfies 
\begin{eqnarray*}
&&u_t - \theta u_{xx} - u_{\theta \theta} - u \leq 0, \qquad (t,x,\theta) \in \R^+ \times \R \times \Theta, \\
&&u_\theta(t,x,0)=0, \\
&&u(t=0,x,\theta) = u_0(x,\theta).
\end{eqnarray*}
As a consequence, $u$ is a sub-solution of the linearized cane toads equation.  On 
the other hand, the function $\tilde u(t,x,\theta)$ defined
by (\ref{nov1122}) and (\ref{nov1166}) is a super-solution, in the sense that 
(\ref{nov1202}) holds.
Let us choose a constant $C(a)$ large enough so that, for all $(x,\theta)\in \R\times \Theta$,
\[
	\tilde u (0,x,\theta) \geq u_0(x,\theta).
\]
We deduce from the parabolic comparison principle  that 
\begin{equation}\label{nov1204}
u(t,x,\theta) \leq \tilde{u}(t,x,\theta),
\end{equation} 
for all $t\ge 0$, $x\in\Rm$, and $\theta\in\Theta$. 

We now use the explicit expression for $\tilde u$ 
to obtain an upper bound on the location of the level sets of $u$.
To this end, fix any $m \in (0,1)$. As $\theta_*(x)$ is 
the minimum of the function $\psi(t,x,\theta)$, the rightmost point $\tilde x_m(t)$
of the 
level set $\{\tilde u(t,x,\theta)=m\}$
is where it intersects the curve~$\theta=\theta_*(x)$:
\begin{equation}\label{nov1340}
\tilde x_m(t) = \frac43 (t+a)^{3/2} 
\Big( 1 - \farc{1}{t+a}\log\Big( \frac{m}{C(a)} \Big)  \Big)^{3/4}. 
\end{equation}  

Using (\ref{nov1204}) and passing to the limit $t\to+\infty$, we obtain
from (\ref{nov1340}):
\begin{equation*}
\lim_{t\to\infty} \frac{\max\{ x \in \R : \exists \,\theta \in \Theta, u(t,x,\theta) \geq m\}}{t^{ \frac32} } \leq  \dfrac{4 }{ 3 }  .
\end{equation*}
This gives us an upper bound on the level sets of $u$, 
but we also need an upper bound on $\rho$.  As follows from (\ref{nov1204}),
it suffices to bound
\[
\tilde \rho(t,x)=\int_{\unth}^\infty \tilde u(t,x,\theta)d\theta.
\]
Let us use $\tilde m = m / 10(t+a)$ in (\ref{nov1340}), so that
\[
	\tilde u(t,x_{m,t},\theta) \leq \frac{m}{10(t+a)},~~\hbox{ for all $\theta\ge\unth$,}
\]
where we define
$x_{m,t} = \tilde x_{m/10(t+a)}(t)$.  We recall that
\begin{equation}\label{nov1345}
\theta_* (x_{m,t})= \left(\frac{3x_{m,t}}{4}\right)^{2/3}.
\end{equation}
Note that, for $t$ large enough we have
\begin{equation}\label{nov1346}
\farc{5}{6}(t+a)\le \theta_* (x_{m,t})\le \farc{7}{6}(t+a).
\end{equation}
Then we have 
\[
\int_0^{5\theta_*(x)}\tilde u(t,x_{m,t},\theta)d\theta
\le 5\theta_*(x)\frac{m}{10(t+a)}\le\frac{5m}{6}.
\]
We now consider the integral from $5\theta_*(x_{m,t})$ 
to $\infty$.  
We write, using (\ref{nov1330}) and (\ref{nov1346}):
\[
\begin{split}
\int_{5\theta_*(x_{m,t})}^\infty \tilde u(t, x_{m,t}, \theta)d\theta
&\leq C(a) \int_{5\theta_*(x_{m,t})}^\infty 
\exp\Big( t+a-\frac{\theta^2}{4 (t + a)}\Big) d\theta\\
&\leq C'(a) \frac{t+a}{\theta_*(x_{m,t})} 
\exp\Big\{t+a- \frac{25\theta_*^2(x_{m,t})}{4(t+a)}\Big\} \le
C'(a)e^{-(t+a)/10}.
\end{split}
\]
If $t$ is sufficiently large, depending only on $a$ and $m$, the last two estimates give us
\[
\limsup_{t\to\infty} \int_{\underline\theta}^\infty \overline u(t,x_{m,t},\theta)d
\theta\leq m.
\]
Since $\overline u$ is monotonic in the spatial variable $x$, 
we have, for any $x \geq 0$
\[
\limsup_{t\to\infty} \rho(t,x + x_{m,t})
\leq \limsup_{t\to\infty}\int_{\underline \theta}^\infty \overline u(t,x+ x_{m,t},
\theta)d\theta
\leq \limsup_{t\to\infty} \int_{\underline \theta}^\infty \overline u(t,x_{m,t},
\theta)d\theta
\leq m.
\]
Noticing that $x_{m,t}/ t^{3/2}$ tends to $4/3$ as $t\to+\infty$ 
finishes the proof.
\end{proof}

\section{The lower bound}\label{sec:lower}

In this section, we obtain a lower bound on the propagation in the local and 
non-local cane toads equations. As we have mentioned, the idea is to construct
sub-solutions of the linearized problem with the Dirichlet boundary condition
on a moving boundary of a domain $E(t)$, 
and then use them to deduce a lower bound on the solution
of the nonlinear problem. The goal is to have $E(t)$ move as fast as possible
while ensuring that the solution of the linearized problem is $O(1)$ -- it neither grows too much, nor decays.
This strategy is inspired by the proof of the Freidlin-G\"artner formula for the standard KPP equation
by J.-M. Roquejoffre~\cite{RR-Toulouse}; however, in contrast, the coefficients that arise in our formulation are non-periodic and so new estimates are required.

To this end, given some constants~$x_c$,~$\theta_c$, and $r$,
we define the ellipse
\begin{equation}\label{eq:ellipse}
E_{x_c,\theta_c,R} 
:{=}\left\{ (x,\theta)\in \R\times[\underline\theta,\infty]: \frac{(x - x_c)^2}{\theta_c} + (\theta - \theta_c)^2 \leq R^2\right\}.
\end{equation}
Given a large time $T$, we will move such an ellipse along some trajectories $X_T(t)$ and $\Theta_T(t)$ 
on the time interval $[0,T]$, starting at a point $(X_T(0),\Theta_T(0))$. We will denote $\Theta_T(0)=H\ge\unth$.
Note that the equation is translationally invariant in $x$, so the starting 
point $X_T(0)$ is not important. 
The trajectories will satisfy certain conditions: first, they move 
``up and to the right":
\begin{equation}\label{dec404}
\dot X_T(t)\ge 0,~~\dot \theta_T(t)\ge 0,~~\hbox{ for all $0\le t\le T$.}
\end{equation}
Next, for all $0 \le t \le T$, with some fixed $\eps>0$ we assume that either
\begin{equation}\label{dec406}
L(X_T(t),\theta_T(t),\dot X_T(t),\dot\theta_T(t))\le 1-2\eps
	~~~\text{ or }~~~
	\dot X(t) = 0 = \dot \Theta(t).
\end{equation}
Here, $L(x,\theta,v_x,v_\theta)$ is the Lagrangian given by (\ref{dec402}):
\begin{equation}\label{dec408}
L(x,\theta,v_x,v_\theta) = \frac{v_x^2}{4\theta} + \frac{v_\theta^2}{4}.
\end{equation}
Finally, we assume that
\begin{equation}\label{dec410}
\lim_{T,H\to\infty} \left[\big| \ddot\Theta_T(t)\big| + 
\frac{\big|\ddot{X}_T(t)\big|}{\sqrt{\Theta_T(t)}}\right] = 0, 
~~~\text{uniformly in  $t\in[0,T]$}.
\end{equation}

We now state our main lemma, which we use in both the local and non-local settings.
\begin{lemma}\label{lem:trajectories}
Consider any trajectories $X_T(t)$ and $\Theta_T(t)$ on $[0,T]$ which satisfy
the above assumptions, and fix $\eps>0$, $\delta>0$, and $\gamma>0$ sufficiently small. There exist constants $R_{\eps}$, $T_{\eps,\delta,\gamma}$, and $H_{\eps,\gamma}$ such that for all $R\geq R_\eps$, $T\geq T_{\eps,\delta,\gamma}$, 
and $H \geq H_{\eps,\gamma}$, there is a function $v$  which solves
\begin{equation}\label{eq:trajectories_subsolution}
\begin{cases}
v_t - \theta v_{xx} - v_{\theta\theta} \leq (1-\eps) v, & \hfill (t,x,\theta) \in \R^+ \times E_{X_T(t),\Theta_T(t),R} \, ,\medskip\\
v(t,x,\theta) \leq 1,	& \hfill (t,x,\theta) \in [0,T] \times E_{X_T(t),\Theta_T(t),R} \,,\medskip \\
v(t,x,\theta) = 0, 		& \hfill (t,x,\theta) \in [0,T] \times \partial E_{X_T(t),\Theta_T(t),R} \, ,\medskip\\
v(0,x,\theta) < \delta, 		& \hfill (x,\theta) \in E_{X_T(0),\Theta_T(0), R}\, ,
\end{cases}
\end{equation}
and  such that $\|v(T,\cdot,\cdot)\|_{L^\infty} \geq 1 - \gamma$, and 
$v(T,x,\theta) \geq C_R$ for all  $ (x,\theta) \in E_{X_T(T),\Theta_T(T),R/2},
$
with a constant $C_R>0$ that depends only on $R$.
The constant $R_\eps$ depends only on $\epsilon$, the constant $H_{\eps,\gamma}$ depends only on $\eps$ and $\gamma$, and $T_{\eps,\delta,\gamma}$ depends only on $\eps$, $\delta$, $\gamma$, and the 
rate of the limit in \eqref{dec410}.
\end{lemma}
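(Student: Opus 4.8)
The plan is to construct $v$ by solving the linearized equation in a moving frame, reducing it to a fixed domain by a time-dependent change of variables, and then identifying the leading-order operator as a rescaled harmonic oscillator whose principal eigenvalue is controlled by the Lagrangian bound \eqref{dec406}. First I would change variables to the moving ellipse: set $y = (x - X_T(t))/\sqrt{\Theta_T(t)}$ and $z = \theta - \Theta_T(t)$, so that the ellipse $E_{X_T(t),\Theta_T(t),R}$ becomes the fixed disc $\{y^2 + z^2 \le R^2\}$. Writing $v(t,x,\theta) = e^{\Lambda(t,x,\theta)} w(t,y,z)$ for a suitable quadratic-in-$(y,z)$ phase $\Lambda$ — chosen precisely to absorb the first-order transport terms $\dot X_T \partial_x$, $\dot\Theta_T \partial_\theta$ generated by the moving frame — the equation for $w$ becomes, to leading order in $1/\sqrt{H}$ and using \eqref{dec410}, a Schrödinger-type equation
\begin{equation*}
w_t = \Delta_{y,z} w + \big(1 - L(X_T(t),\Theta_T(t),\dot X_T(t),\dot\Theta_T(t)) + o(1)\big) w + (\text{small quadratic potential}) w,
\end{equation*}
on the fixed disc with Dirichlet boundary conditions. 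The key point is that the ``action'' accumulated by the phase $\Lambda$ along the trajectory is exactly $-\int_0^t L(X_T,\Theta_T,\dot X_T,\dot\Theta_T)\,ds$, so the exponential growth rate of $v$ is governed by $1 - L \ge 2\eps$, beating the Dirichlet principal eigenvalue of the disc once $R$ is large enough (this dictates the choice of $R_\eps \sim 1/\sqrt{\eps}$, since the first Dirichlet eigenvalue of a disc of radius $R$ scales like $1/R^2$).

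Next I would make this rigorous by a sub-/super-solution sandwich rather than exact spectral analysis. For the lower bound defining $v$, I take $v = \beta(t)\, \varphi_1(y,z)\, e^{\Lambda(t,x,\theta)}$ where $\varphi_1$ is the principal Dirichlet eigenfunction of $-\Delta_{y,z}$ on the disc of radius $R$ (positive, vanishing on the boundary), and $\beta(t)$ is chosen to handle the error terms: the hypotheses \eqref{dec410} on $\ddot\Theta_T$ and $\ddot X_T/\sqrt{\Theta_T}$, together with $\Theta_T(t) \ge H$, guarantee that all the commutator and lower-order error terms coming from the change of variables are $o(1)$ uniformly on $[0,T]$ once $H \ge H_\eps$, so that choosing $\eps$ in \eqref{eq:trajectories_subsolution} to be, say, half the $2\eps$ in \eqref{dec406} leaves room to absorb them. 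The Dirichlet boundary condition and the bound $v \le 1$ on $[0,T]$ are arranged by scaling: one starts with $v(0,\cdot) = \delta' \varphi_1 e^{\Lambda(0,\cdot)}$ with $\delta'$ small enough that $v(0,\cdot) < \delta$, then lets it evolve; because the net growth rate is at least $\eps > 0$, after a time $T_{\eps,\delta}$ (depending on $\delta$ and on how fast the limit in \eqref{dec410} is attained) the sup norm reaches $1$, at which point one can stop — and the Harnack inequality / explicit form $v = \beta(T)\varphi_1 e^\Lambda$ gives $v(T,\cdot) \ge C_R$ on the half-radius ellipse $E_{X_T(T),\Theta_T(T),R/2}$, since $\varphi_1$ is bounded below there and $e^\Lambda$ is comparable to a constant on a set of bounded diameter in the $(y,z)$ variables.

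The main obstacle is the unboundedness of the diffusivity $\theta$ and the fact that the natural ellipse is anisotropic with $\theta$-dependent shape: the change of variables $y = (x-X_T)/\sqrt{\Theta_T}$ introduces terms where $\Theta_T$ and its derivatives appear, and one must check that $\theta \partial_{xx}$ transforms into $\partial_{yy}$ up to errors controlled by $(\theta - \Theta_T)/\Theta_T$, which on the ellipse is $O(R/H) = o(1)$, and by $\dot\Theta_T/\Theta_T$ and $\ddot\Theta_T$, which \eqref{dec410} forces to vanish. Managing these error terms uniformly in $t \in [0,T]$ — so that the constants $R_\eps$, $H_\eps$ depend only on $\eps$ and $T_{\eps,\delta}$ only on $\eps$, $\delta$, and the rate in \eqref{dec410} — is the delicate bookkeeping at the heart of the argument; the spectral input (principal eigenvalue of the Laplacian on a disc) is classical, and the construction of the quadratic phase $\Lambda$ solving the eikonal-type equation along the trajectory is exactly the content of the Hamilton--Jacobi computation in Section \ref{sec:HJ}, so those pieces are available. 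I would defer the most technical verifications to Section \ref{sec:trajectories}, as the excerpt indicates.
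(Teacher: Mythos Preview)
Your overall architecture---moving frame $y=(x-X_T)/\sqrt{\Theta_T}$, $\eta=\theta-\Theta_T$, an exponential phase to kill the transport terms, and a Dirichlet eigenfunction on the fixed ball---matches the paper's. But there is a real gap in your choice of eigenfunction. You propose $w=\beta(t)\varphi_1(y,\eta)$ with $\varphi_1$ the principal Dirichlet eigenfunction of the \emph{fixed} Laplacian on $B_R$, and you plan to absorb the residual drift $A=O(R/\Theta_T)$ and diffusion perturbation $D-1=\eta/\Theta_T=O(R/\Theta_T)$ as ``$o(1)$ error terms.'' This fails at the boundary: the needed differential inequality reads $\frac{\beta'}{\beta}\varphi_1 - A(\varphi_1)_y - (D-1)(\varphi_1)_{yy} + \lambda_{B_R}\varphi_1 \le \frac{3\eps}{4}\varphi_1$, and while $A$ and $D-1$ are small in $L^\infty$, the quantities $(\varphi_1)_y$ and $(\varphi_1)_{yy}$ do \emph{not} vanish on $\partial B_R$ (Hopf's lemma gives $\partial_n\varphi_1<0$ there). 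A direct check at the boundary point $(y,\eta)=(0,R)$ shows the left side equals $-\varphi'(R)/\Theta_T>0$ while the right side vanishes, so the inequality is violated no matter how large $H$ is. In other words, the perturbation in the second-order coefficient, though $O(1/H)$ in sup norm, produces an error that is \emph{not} dominated by $\varphi_1$ near the boundary.

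The paper resolves this by taking instead the principal eigenfunction $\phi_{T,H}(t,\cdot)$ of the \emph{full} time-dependent operator $L_{T,H}(t)=-A\partial_y - D\partial_{yy}-\partial_{\eta\eta}$ on $B_R$; then $L_{T,H}\phi_{T,H}=\lambda_{T,H}\phi_{T,H}$ exactly and there is no spatial error at all. The price is that $\phi_{T,H}$ now depends on $t$, and one must show $\|\partial_t\phi_{T,H}/\phi_{T,H}\|_{L^\infty}\to 0$ as $T,H\to\infty$; this is the content of a separate lemma (Lemma~\ref{lem:time_derivative} in the paper) and is the genuinely technical step you are missing. Two smaller points: the phase $\Lambda$ that removes the transport is \emph{linear} in $(y,\eta)$, namely $\frac{\dot X_T}{2\sqrt{\Theta_T}}y+\frac{\dot\Theta_T}{2}\eta$, not quadratic; and it has nothing to do with the global Hamilton--Jacobi computation of Section~\ref{sec:HJ}---it is just the standard Liouville/gauge transformation that trades a first-order drift for a potential.
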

\noindent
We apply Lemma~\ref{lem:trajectories} as follows.  First, we use it to build a sub-solution  along a sufficiently large ellipse 
moving along a suitably chosen trajectory $(X_T(t),\Theta_T(t))$.  In this step, we choose $\delta$ such that we may fit $\eps v$ 
underneath the solution $u$ so that $u$ always stays above $\eps v$.  Hence, $u$ is at least of the size $\epsilon C_R$
near the point $(X_T(T),\Theta_T(T))$, after time $T$.  Then we 
re-apply the lemma, with the trivial trajectory that remains fixed at the 
point $(X_T(T),\Theta_T(T))$ for all time, to build a sub-solution to~$u$ on the ellipse $E_{X_T(T),\Theta_T(T),R/2}$ that grows from $\eps C_R$ 
to any prescribed height~$m\in(0,1)$ in $O(1)$ time, depending on $\epsilon$ and $m$.  It follows that $u$ is at least of height $m$ near $(X_T(T), \Theta(T))$ after 
time~$T + O(1)$.

The proof of Lemma~\ref{lem:trajectories} involves   estimates of the solution to a spectral problem 
posed on the moving domain $E_{X_T(t),\Theta_T(t),R}$ after a suitable change of variables and a rescaling.  
We prove this lemma in Section \ref{sec:trajectories} below.

\subsection{The lower bound in the local equation}\label{ss:local_lower}

Here, we show that Lemma~\ref{lem:trajectories} allows us to propagate a constant amount of mass along trajectories that we choose carefully.  Our main result in this section is the following.
\begin{prop}\label{prop:lower_bound}
Suppose that $u$ satisfies the local cane toads equation \eqref{nov1012} with any initial data~$u_0> 0$.  
Then, for any $m \in (0,1)$,  we have
\[
\frac43 \leq \liminf_{T\to\infty} \frac{\max \left\{ x : \exists \theta \in \Theta,  u(T,x,\theta) \geq m \right\}}{T^{3/2}}.
\]
\end{prop}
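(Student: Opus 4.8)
The plan is to apply Lemma~\ref{lem:trajectories} along a trajectory that mimics the optimal Hamilton--Jacobi path computed in~\eqref{dec706}, but reparametrized to make the Lagrangian cost strictly subcritical, and then to patch together the two applications of the lemma described after its statement. First I would fix $m\in(0,1)$ and $\eps>0$ small, and consider for a large final time $T$ a trajectory of the form $X_T(t) = (1-\eta)\,x_{\rm edge}\!\big(\tfrac{t}{1-\eta}\big)$-type scaling: concretely, take a path following the shape of~\eqref{dec706} but slowed down by a factor $(1-\eta)$ in time, so that it reaches a point of the form $\big(\tfrac43(1-\eta)^{?}T^{3/2},\,(1-\eta)T\big)$ at time $T$. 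The key computation is to check that along this slowed-down optimal path the Lagrangian~\eqref{dec408} evaluates to exactly $(1-\eta)^2$ times the critical value $1$ (since the HJ value function satisfies $\psi(t,x_{\rm edge}(t),\theta_{\rm edge}(t)) = t$, the optimal path has Lagrangian cost identically $1$, and rescaling time by $(1-\eta)$ scales the cost by a factor that one reads off from the $1/\theta$ and $1/4$ weights). Choosing $\eta = \eta(\eps)$ small enough then gives~\eqref{dec406} with the constant $1-2\eps$.

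Next I would verify the remaining hypotheses of Lemma~\ref{lem:trajectories} for this trajectory: monotonicity~\eqref{dec404} holds on the relevant time window because, from~\eqref{dec706}, $\theta(s)=s(2-s/t)$ is increasing on $[0,t]$ and $X(s)$ is increasing there as well; and the second-derivative decay~\eqref{dec410} holds because, after the time-$(1-\eta)$ slowdown, $\ddot\Theta_T$ is a constant of order $1/T$ and $\ddot X_T/\sqrt{\Theta_T}$ is of order $T^{1/2}/\sqrt{T} \cdot (1/T) \to 0$ — this is precisely where one uses that the $x$-diffusion is weighted by $\theta$, so the relevant quantity is $\ddot X_T/\sqrt{\Theta_T}$ rather than $\ddot X_T$ alone. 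One should also start the trajectory at a height $H = \Theta_T(0)$ that is allowed to tend to infinity (e.g. a small power of $T$), which is compatible with the lemma's requirement $H\ge H_\eps$ and with the structure of~\eqref{dec706} once one shifts the starting time slightly away from $s=0$, avoiding the degenerate region $\theta\approx\unth$ where the optimal path is nearly vertical.

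Then I would run the two-step argument sketched in the paper after Lemma~\ref{lem:trajectories}: apply the lemma once with $\delta$ chosen so small that $\eps v(0,\cdot,\cdot) < \eps\delta$ lies below $u(0,\cdot,\cdot)$ — here one uses that $u_0>0$ together with a parabolic lower bound (strong maximum principle / Harnack) to guarantee $u$ is bounded below by a positive constant on the compact initial ellipse $E_{X_T(0),\Theta_T(0),R}$, uniformly; strictly speaking $u$ only needs to dominate $\eps v$ at the initial time and on the Dirichlet boundary (where $v=0$), so the comparison principle for~\eqref{nov1014} — valid here because the local equation has one — gives $u \ge \eps v$ on the moving ellipse for $t\in[0,T]$. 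Hence $u(T,X_T(T),\Theta_T(T)) \ge \eps C_R$. Re-applying the lemma with the stationary trajectory at $(X_T(T),\Theta_T(T))$ and with $\delta' = \eps C_R/2$ (or rescaling $v$), and running it for an additional $O_{\eps,m}(1)$ time — the linear equation has growth rate $1-\eps>0$, so a subsolution grows from $\eps C_R$ to any $m<1$ in bounded time — yields $u(T+O(1), x,\theta) \ge m$ at some $(x,\theta)$ with $x = X_T(T) \sim \tfrac43(1-\eta)^{3/2}T^{3/2}$ (up to the precise power of $1-\eta$, which I would pin down in the Lagrangian computation). Letting $T\to\infty$ and then $\eps\to 0$ (so $\eta\to0$) gives $\liminf \frac{\max\{x:\exists\theta,\,u\ge m\}}{T^{3/2}} \ge \tfrac43$. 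The main obstacle I anticipate is the first step: getting the quantitative relationship between the slowdown parameter $\eta$ and the Lagrangian value exactly right, while simultaneously keeping~\eqref{dec410} and the starting-height constraint satisfied — the optimal HJ trajectory is genuinely borderline (Lagrangian cost exactly $1$), so there is no slack to exploit and the perturbation must be arranged with care, in particular choosing how fast $H$ and $R$ grow relative to $T$ so that all of the lemma's constants $R_\eps, H_\eps, T_{\eps,\delta}$ are eventually dominated.
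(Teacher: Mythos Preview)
Your approach is essentially the paper's: slow down the optimal Hamilton--Jacobi trajectory~\eqref{dec706}, verify the hypotheses of Lemma~\ref{lem:trajectories}, apply the lemma once along the moving ellipse and once more on a stationary ellipse to boost from $\eps C_R$ up to $m$, then send $T\to\infty$ and $\eps\to 0$. The paper resolves your scaling uncertainty by taking
\[
X_T(t) = (1-2\eps)^{3/4}\Big(\frac{2t^2}{T^{1/2}} - \frac{2t^3}{3T^{3/2}}\Big),\qquad
\Theta_T(t) = (1-2\eps)^{1/2}\Big(2t - \frac{t^2}{T}\Big) + H,
\]
so that $X_T(T) = \tfrac{4}{3}(1-2\eps)^{3/4}T^{3/2}$; the exponents $3/4$ and $1/2$ are exactly what make the Lagrangian bounded by $1-2\eps$ along the entire path. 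For the second application the paper takes $R'=R/2$, $\eps'=1-m$, $\delta' = \eps C_R/m$, and scales the resulting subsolution by $m$.

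One point in your plan does create a genuine circularity, and the paper handles it differently. You propose letting $H$ (and perhaps $R$) grow with $T$, but the paper fixes $H$ and $R$ once and for all, merely requiring $H\ge H_\eps$ and $R\ge R_\eps$. The reason this matters is that $\delta := \inf_{E_{X_T(0),\Theta_T(0),R}} u_0$ then depends on $H$ and $R$ but \emph{not} on $T$, so $T_{\eps,\delta}$ is a fixed number and the condition $T\ge T_{\eps,\delta}$ is eventually satisfied. If instead $H=H(T)\to\infty$, the initial ellipse escapes to infinity in $\theta$; since $u_0$ is only assumed positive with no uniform lower bound at infinity, $\delta(T)$ may tend to zero, forcing $T_{\eps,\delta(T)}$ to grow as well, and you would need $T\ge T_{\eps,\delta(T)}$ --- a condition that may never close. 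The lemma only asks $H\ge H_\eps$, not $H\to\infty$, and keeping $H$ fixed is both sufficient and what makes the bookkeeping work.
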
\noindent
The assumption that $u_0$ is positive is not restrictive since any solution with a non-zero, non-negative initial condition 
becomes positive for all $t>0$ as a consequence of the maximum principle.  In particular, as 
the initial condition $u_0$ in Theorem~\ref{thm:local_toads}
  is compactly supported, non-negative, and non-zero,  we may apply Proposition~\ref{prop:lower_bound} to $\tilde u$, 
the solution to the cane toads equation with the initial condition $\tilde u_0(x,\theta) = u(t=1,x,\theta)$.

Take any function $v$ given by~\Cref{lem:trajectories}.  Since $v \leq 1$, then $\epsilon v \leq \epsilon$ and, hence, satisfies
\[
	(\epsilon v)_t - \theta (\epsilon v)_{xx} - (\epsilon v)_{\theta\theta} - (\epsilon v)(1 - (\epsilon v))
		\leq -\epsilon(\epsilon v) + (\epsilon v)^2
		\leq 0.
\]
In particular, if we choose $\delta$ and our trajectories well, we will have that
\begin{equation}\label{dec702}
u(t,x,\theta)\ge \eps v(t,x,\theta)\hbox{ for all $0\le t\le T$ and $(x,\theta)\in E_{X_T(t),\Theta_T(t),R}$.}
\end{equation}
Here, $u(t,x)$ is the solution to \eqref{nov1012}. In particular, we have
\begin{equation}\label{dec720}
\forall (x,\theta)\in E_{X_T(T),\Theta_T(T),R/2}, \qquad u(T,x,\theta)\ge \eps C_R,
\end{equation}
after a sufficiently long time $T$.  
However, we do not have control on the constant $C_R$.  To remedy this, we again apply \Cref{lem:trajectories}, getting 
a sub-solution $v'$ for $u$, in order to show that we can quickly grow the solution 
from this small constant to $O(1)$.  As such, we make the choices 
$R' = R/2$, and~$\delta' = \epsilon C_R$, $\epsilon' = m$.

\subsubsection*{Proof of Proposition \ref{prop:lower_bound}}

Let us now present the details of the argument.
We fix $\eps > 0$ and any $m \in (0,1)$, and  let $u$ be the solution to \eqref{nov1012} with
the  initial condition~$u_0$.  Given $T$, $R$, and $H$ to be determined later, we will use the trajectories 
which are a slightly slowed down version of the optimal 
Hamilton-Jacobi trajectories~(\ref{dec706}):
\begin{eqnarray}\label{dec708}
&&X_T(t) = (1 - 2\eps)^{3/4}\left( \frac{2t^2}{T^{1/2}} - \frac{2t^3}{3T^{3/2}}\right),\\
&&\Theta_T(t) = (1 - 2\eps)^{1/2}\left( 2t - \frac{t^2}{T}\right) + H.\nonumber
\end{eqnarray}
It is straightforward to verify to that $X_T$ and $\Theta_T$ satisfy the assumptions above Lemma~\ref{lem:trajectories}: for all $0\le t \le T$, we have
$\dot X_T(t)\ge 0$, $\dot\Theta_T\ge 0$ and 
\[
L(X_T(t),\Theta_T(t),\dot X_T(t),\dot\Theta_T(t)) \leq 1-2\eps
\]
while 
\[
\ddot X_T(t)=\farc{4(1-2\eps)^{3/4}}{T^{1/2}}\Big(1-\farc{t}{T}\Big),~~
\ddot\Theta_T(t)=-\farc{2(1-2\eps)^{1/2}}{T},
\]
so that (\ref{dec406}) and (\ref{dec410}) hold as well.

We set $\gamma = \epsilon$ and
\[
\delta \stackrel{\rm def}{=} \inf_{E_{X_T(0),\Theta_T(0),R}} u_0(x,\theta).
\]
Note that $\delta$ depends on $R$ and $H$ but not on $T$.
Applying Lemma~\ref{lem:trajectories}, we may find $R_\eps$, $T_{\eps,\delta}$ and~$H_\eps$ such that if 
$R \geq R_{\eps}$, $T \geq T_{\eps,\delta}$ and $H \geq H_{\eps}$, then there exists a function 
$v$ which satisfies \eqref{eq:trajectories_subsolution}.  Hence, as we have discussed,
the function $\eps v$ is a sub-solution to 
$u$ on $E_{X_T(t), \Theta_T(t), R}$ for all $t \in [0,T]$, and (\ref{dec702})-(\ref{dec720}) hold.  In particular, we have that
\[
	u(T,x,\theta) \geq \epsilon v(T,x,\theta) \geq \epsilon C_R
\]
for all $(x,\theta) \in E_{X_T(T),\Theta_T(T),R/2}$.

Next, we use Lemma~\ref{lem:trajectories} a second time, with the new choices
\[
	R' = R/2,
	~~~ \delta' = \frac{\epsilon C_R}{m},
	~~~ \epsilon' = \frac{1+m}{2}
	~~~ \text{ and } ~~~ \gamma' = \frac{1-m}{1+m},
\]
to find $H_m$, $R_m$ and $T_{m,\delta'}$ such that if $R/2 = R' \geq R_m$ and $H \geq H_m$ then we may find a solution $w$ to~\eqref{eq:trajectories_subsolution} on $[0,T_{m,\delta'}]\times E_{X_T(T),\Theta_T(T),R'}$.  We shift in time and scale to define
\[
	\tilde w(t,x,\theta) \stackrel{\rm def}{=} \epsilon' w(t - T, x, \theta).
\]
By our previous work and our choice of $\delta'$, it follows that $\tilde w(T,x,\theta) \leq u(T,x,\theta).$  In addition, the partial differential equation for $w$,~\eqref{eq:trajectories_subsolution}, with our choice of $m$, guarantees that $\tilde w$ is a sub-solution to $u$ on $[T,T+T_{m,\delta'}]\times E_{X_T(T),\Theta_T(T),R'}$.  This implies that
\begin{equation}\label{eq:big_enough}
\begin{split}
	\|u&(T+T_{m,\delta'},\cdot,\cdot)\|_{L^\infty(E_{X_T(T),\Theta_T(T),R'})}
		\geq \|\tilde w(T+T_{m,\delta'},\cdot,\cdot)\|_{L^\infty(E_{X_T(T),\Theta_T(T),R'})}\\
		&= \epsilon' \|w(T_{m,\delta'}, \cdot,\cdot)\|_{L^\infty(E_{X_T(T),\Theta_T(T),R'})}
		\geq \frac{1+m}{2} \left(1 - \gamma\right)
		= \frac{1+m}{2} \left(1 - \frac{1-m}{1+m}\right)
		\geq m.
\end{split}
\end{equation}
The first inequality is a consequence of the fact that $\tilde w$ is a sub-solution of $u$, the first equality is a consequence of the definition of $\tilde w$, and the final equalities is a consequence of~\Cref{lem:trajectories} and the choices of $\epsilon'$ and $\gamma$.

The above,~\eqref{eq:big_enough}, implies that $u$ achieves a value at least as large as $m$ for some 
\[
x \geq X_T(T) - \sqrt{\Theta_T(T)}R'
\]
at time $T + T_{m,\delta'}$. As a consequence, we have
\begin{equation}\label{dec722}
\frac{4}{3}(1-2\eps)^{3/4}
  \left(\frac{T}{T+T_{m,\delta'}}\right)^{3/2} - \frac{\sqrt{T + H}}{(T+T_{m,\delta'})^{3/2}}R' 
\leq \frac{\max\{x: \exists \theta \in \Theta, u(T+T_{m,\delta}, x, \theta) =m\}}{(T+T_{m,\delta'})^{3/2}}.
\end{equation}
Here, we used the definition of $X_T(T)$ and $\Theta_T(T)$, and the upper bound 
\[
\Theta_T(T) \leq T + H+R.
\]
Since (\ref{dec722}) holds for all $T$ sufficiently large,
and since $H$, $R$, and $T_{m,\delta}$ are fixed, we may take the limit as $T$ tends to infinity to obtain
\[
\frac{4}{3}(1 - 2\eps)  ^{3/4}
\leq \liminf_{T\to\infty} \frac{\max\{x: \exists \theta \in \Theta, u(T, x,\theta) \geq m\}}{T^{3/2}}.
\]
Since $\eps$ is arbitrary, this finishes the proof of Proposition \ref{prop:lower_bound}.~$\Box$

\subsection{The lower bound in the non-local equation}\label{ss:nonlocal_lower}

In this section we prove the following proposition.
\begin{prop}\label{prop:nonlocal_lowerbound}
Suppose that $n$ is a solution of the non-local cane toads equation~\eqref{nov1016}
with a positive initial condition $n_0\in L^\infty(\R \times \Theta)$.  Define
\[
	\rho_0(x) \stackrel{\rm def}{=} \int_{\underline \theta}^\infty n_0(x,\theta)d\theta,
\]
and assume that $\rho_0 \in L^\infty(\R)$.  Then, for any $\eps>0$ and $\gamma \in (0,1)$ we have
\begin{equation}\label{dec1802}
c_1\stackrel{\rm def}{=}\frac{8}{3\sqrt{3\sqrt{3}}}\left(1 - 2\eps\right)^{3/4} \leq \limsup_{T\to\infty} \frac{\max \left\{ x : \rho(T,x) \geq \gamma \eps
\right\}}{T^{3/2}}.
\end{equation}
\end{prop}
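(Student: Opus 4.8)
The plan is to argue by contradiction, much as one does in propagation results for non-local Fisher-KPP equations. Suppose the conclusion fails for some $\eps > 0$ and $\gamma \in (0,1)$: then there exist a constant $c_0 < c_1$ and a time $T_0$ such that $\rho(T,x) < \gamma\eps$ for all $x \geq c_0 T^{3/2}$ and all $T \geq T_0$. The idea is that this ``emptiness in front'' means the reaction term $n(1-\rho)$ in~\eqref{nov1016} is bounded below by $(1-\gamma\eps)n$ in the spatial region $\{x \geq c_0 T^{3/2}\}$, so that $n$ is a super-solution of the \emph{linear} equation $n_t = \theta n_{xx} + n_{\theta\theta} + (1-\gamma\eps)n$ there. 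We then use Lemma~\ref{lem:trajectories} (after rescaling $\eps \rightsquigarrow \gamma\eps$, or simply absorbing $\gamma\eps$ into the $\eps$ of the lemma) to build a sub-solution $v$ to this linear equation on a moving ellipse $E_{X_T(t),\Theta_T(t),R}$, whose trajectory we choose so that its spatial footprint stays to the right of $c_0 T^{3/2}$ for the bulk of the time interval, yet reaches a final position $x \sim c_1 T^{3/2} > c_0 T^{3/2}$ at time $T$. Since $v$ is arbitrarily small at $t=0$ (choose $\delta$ smaller than $\inf n_0$ on the initial ellipse, which is positive and independent of $T$), the comparison principle for the \emph{local} linear equation on the moving Dirichlet ball gives $n \geq \eps' v$ there, hence $\rho \geq \eps' \int v \,d\theta \gtrsim \eps' C_R$ at the final point — contradicting $\rho(T,x) < \gamma\eps$ once we arrange $\eps' C_R \geq \gamma\eps$. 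The $\limsup$ (rather than $\liminf$) appears because the contradiction only needs to be reached along \emph{some} sequence $T\to\infty$: if $\rho$ were small in front for all large $T$ we would get a contradiction, so it must be large in front for some unbounded sequence.

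The key technical issue, and the reason for the sub-optimal constant $8/(3\sqrt{3\sqrt 3})$, is the choice of trajectory. The optimal Hamilton-Jacobi trajectory~\eqref{dec706} that achieves $x_{\rm edge}(t) = \tfrac43 t^{3/2}$ moves \emph{almost purely in the $\theta$ direction initially} — indeed $X(s) \sim s^2$ is quadratically small near $s=0$ — so for a long initial sub-interval it stays near $x \approx 0$, far to the \emph{left} of $c_0 T^{3/2}$. We therefore cannot use it to keep the Dirichlet ellipse inside the region where the reaction bound $n(1-\rho) \geq (1-\gamma\eps)n$ is valid. Instead one must select a trajectory $(X_T(t),\Theta_T(t))$ that moves rightward more uniformly — paying a higher Lagrangian cost and thus, to keep $L \leq 1-2\eps$ along it, covering less total distance. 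Concretely one optimizes $\int_0^T L(w,\dot w)\,ds \leq 1-2\eps$ over paths whose spatial graph $s \mapsto X_T(s)$ dominates $c_0 (\text{something})^{3/2}$; the extremal such constrained path — plausibly one with $\theta$ held at a well-chosen constant multiple of its final value, giving $\dot X$ roughly linear in $s$ — yields the factor $\tfrac{8}{3\sqrt{3\sqrt 3}}$ after the calculus. I would present this as: parametrize a family of admissible trajectories (e.g. $\Theta_T(t) = \beta T\,(t/T) + H$, $X_T(t)$ determined by saturating $L = 1-2\eps$), verify~\eqref{dec404}, \eqref{dec406}, \eqref{dec410} hold for appropriate $\beta$, compute the endpoint $X_T(T)/T^{3/2}$, and optimize over $\beta$.

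The remaining steps are then routine. One checks that the chosen trajectory satisfies the hypotheses of Lemma~\ref{lem:trajectories}: the curvature conditions~\eqref{dec410} hold because $\ddot X_T, \ddot\Theta_T$ carry negative powers of $T$ (and of $H$ in the $\Theta$ direction), and the Lagrangian bound~\eqref{dec406} holds by construction with the slow-down factor $(1-2\eps)^{3/4}$, $(1-2\eps)^{1/2}$ as in~\eqref{dec708}. One applies the lemma to get $v$, uses that in the region to the right of $c_0 T^{3/2}$ the function $n$ is a super-solution of $n_t - \theta n_{xx} - n_{\theta\theta} = (1-\gamma\eps)n \geq (1-\eps)n$ (assuming WLOG $\gamma\eps \le \eps$, or relabeling), invokes the parabolic comparison principle on the moving ball $E_{X_T(t),\Theta_T(t),R}$ — valid because the equation there is local and linear — to conclude $n(T,\cdot) \geq \eps' C_R$ on $E_{X_T(T),\Theta_T(T),R/2}$, integrates in $\theta$ over that ellipse (whose $\theta$-extent is $\sim R\sqrt{\Theta_T(T)} \gtrsim R\sqrt{T}$, comfortably bounding $\int v\,d\theta$ below by a positive constant times $C_R$), and derives $\rho(T,x) \geq c_\eps$ at a point $x \geq X_T(T) - R\sqrt{\Theta_T(T)} \geq (c_1 - o(1))T^{3/2} > c_0 T^{3/2}$, the desired contradiction. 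An explicit admissible value of $c_\eps$ (equivalently the constant in front of $\eps$ in $\gamma\eps$) emerges from tracking $\eps' C_R$ through this chain; this supplies the explicit bound on $c_\eps$ promised in the introduction. The main obstacle throughout is genuinely the constrained-optimization of the trajectory in the first technical paragraph — everything downstream is bookkeeping with the comparison principle and Lemma~\ref{lem:trajectories}.
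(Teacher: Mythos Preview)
Your approach is essentially the paper's: argue by contradiction, use the resulting bound $\rho<\gamma\eps$ ahead of the front to linearize, apply Lemma~\ref{lem:trajectories} along a trajectory that stays in that region, and integrate in $\theta$ to force $\rho$ large there. Your proposed optimization over linear-in-$t$ trajectories $\Theta_T(t)=\beta t+H$ with $X_T$ determined by saturating the Lagrangian indeed gives optimal $\beta=2/\sqrt3$ and endpoint $X_T(T)/T^{3/2}=8/(3\sqrt{3\sqrt3})$; the paper simply writes this down explicitly as $X_T(t)=c_\gamma(t+t_\eps)^{3/2}$, $\Theta_T(t)=(1-2\gamma\eps)^{1/2}\bigl[\tfrac{2}{\sqrt3}(t+t_\eps)+H\bigr]$.

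One correction worth flagging: by the definition~\eqref{eq:ellipse} the $\theta$-extent of $E_{x_c,\theta_c,R}$ is $2R$, not $R\sqrt{\Theta_T}$ --- it is the $x$-direction that is stretched by $\sqrt{\theta_c}$. Hence $\int v\,d\theta$ over the half-ellipse is of order $C_R R$, not $C_R R\sqrt T$. This does not damage the argument, since a fixed positive lower bound is all you need; the free multiplicative constant (your $\eps'$, the paper's $M=2\eps/(RC_R)$) is what one tunes to push the integral above $\gamma\eps$, with $\delta$ in Lemma~\ref{lem:trajectories} then chosen as $M^{-1}\inf n$ on the initial ellipse.
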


Our strategy here is similar to the local case, though this time we argue by contradiction.  
Suppose that the result does not hold.  Then there exists $\eps>0$, a time $t_\eps > 0$ and $\gamma \in (0,1)$ such that, for all $t\geq t_\eps$,

\begin{equation}\label{eq:rho_bound}
	\sup_{x \geq c_1 t^{3/2}} \rho(t,x) < \gamma\eps.
\end{equation}

Our goal is  to construct a sub-solution $v$ to $n$ which will satisfy
\[
\int_{\unth}^\infty v(t,x,\theta)d\theta\ge \eps>\gamma\eps,
\]
for some $x\ge c_1t^{3/2}$.
This will push $\rho(t,x)$ 
to be greater than $\eps$ as well, yielding a contradiction to~\eqref{eq:rho_bound}.  
Note that if \eqref{eq:rho_bound} holds, then any solution to
\begin{equation}\label{dec902}
	v_t - \theta v_{xx} - v_{\theta\theta} \leq v ( 1 - \gamma\eps),
\end{equation}
defined for $t \geq t_\eps$ and which is supported on $x\ge c_1 t^{3/2}$ 
satisfies
\begin{equation}\label{dec1402}
n(t,x,\theta)\ge v(t,x,\theta)\hbox{ for all $t\ge t_\eps$,
$(t,x,\theta)\in\hbox{supp}~v$},
\end{equation}
provided that this inequality holds at $t=t_\eps$.
This is because
(\ref{eq:rho_bound}) implies
\begin{equation}\label{dec904}
	n_t - \theta n_{xx} - n_{\theta\theta}
		= n(1-\rho)
		\geq n (1 - \gamma\eps),
\end{equation}
for all $t \geq t_\eps$ and $x \geq c_1 t^{3/2}$.
Note that the nature of the argument by contradiction requires us now to have the ``Dirichlet ball" completely to the right of  
$x=c_1t^{3/2}$. On the other hand, the ``nearly optimal" Hamilton-Jacobi trajectories (\ref{dec708}) that we have used in the local case,
initially move mostly in the $\theta$-direction when $T$ is large, and violate this condition.  This forces us to choose sub-optimal
trajectories for the center of the ``Dirichlet ball", leading to the non-sharp constant $c_1$ in~(\ref{dec1802}). 
We assume now (\ref{eq:rho_bound})  and
define the trajectories
\begin{equation}\label{dec1404}
X_T(t) = c_\gamma(t+t_\eps)^{3/2},~~~~
\Theta_T(t) = (1-2\gamma\eps)^{1/2} \frac{2}{\sqrt{3}}(t+t_\eps) + H,
\end{equation}
with
\[
c_\gamma  {=} \frac{8}{3\sqrt{3\sqrt{3}}} (1 - 2\gamma\eps)^{3/4}.
\]
As $c_\gamma>c_1$, we have 
\[
\hbox{$X_T(t)>c_1(t+t_\eps)^{3/2}$ for all $t>0$.}
\]
The constant $H$ will be determined below.  We note that $X_T$ and $\Theta_T$ satisfy the assumptions preceding 
Lemma~\ref{lem:trajectories}.  
Indeed, both the non-negativity of $\dot X_T$ and $\dot \Theta_T$ in (\ref{dec404})
and  the limit in (\ref{dec410}) are obvious from (\ref{dec1404}).   
Hence, we only need to check the  condition on the Lagrangian in \eqref{dec406}.  To this end, we compute
\begin{eqnarray*}
&&\frac{\dot X_T^2}{4\Theta_T} + \frac{\dot \Theta_T^2}{4}
=\farc{4(1-2\gamma\eps)(t+t_\eps)}{
6(t+t_\eps)+\frac{3\sqrt{3}}{\sqrt{1-2\gamma\epsilon}}H}+\frac{(1-2\gamma\eps) }{3}
		< (1 - 2\gamma\eps).
\end{eqnarray*}

We now build a sub-solution on $E_{X_T(t),\Theta_T(t),R}$ 
for suitably chosen $H$, $R$, and $T$ which grows and forces $\rho$ to be larger than $\gamma\eps$, giving us a contradiction.  
The aforementioned condition on the support, i.e.~that $(x,\theta)\in\supp(v)$ only if $x \geq c_1t^{3/2}$,  is equivalent to
\begin{equation}\label{eq:subsolution_condition}
	X_T(t-t_\eps) - R\sqrt{\Theta_T(t-t_\eps)} \geq c_1t^{3/2},
\end{equation}
for all $t \in[t_\eps, t_\eps + T]$.  In other words, the left-most point of a ball centered at $X_T(t-t_\eps)$ of the radius~$R\sqrt{\Theta_T(t-t_\eps)}$ must be 
to the right of $c_1t^{3/2}$.  Since $c_\gamma > c_1$, 
we can clearly arrange for this to be satisfied by increasing, if necessary, 
$t_\eps$ in a way depending only on $R$ and $H$.
 
Fix $M > 0$ to be determined later.  Let us define
\[
\delta \stackrel{\rm def}{=} \frac{1}{M} \inf\limits_{E_{X_{T}(0),\Theta_{T}(0),R}} n(t_\eps, x, \theta).
\]
Note that $\delta$ depends on $R$ and $M$ but not on $T$.  Applying Lemma~\ref{lem:trajectories} with $\epsilon$ and $\delta$ defined above and $\gamma = 1/2$, we may find 
$H_\eps$ and $R_\eps$, depending only on $\eps$, and 
$T_{\eps,\delta}$, that depends only on $\eps$, $\delta$, such that, 
if $H \geq H_\eps$, $R \geq R_\eps$, and $T\geq T_{\eps,\delta}$, we may find $v$ which satisfies~\eqref{eq:trajectories_subsolution}.  Define
\[
	\tilde v(t,x,\theta) = M v(t - t_\epsilon, x, \theta).
\]
By virtue of the discussion above and by~\eqref{eq:trajectories_subsolution}, we see that $\tilde v$ is a sub-solution of $u$ on $[t_\epsilon, t_\epsilon + T_{\epsilon,\delta}]\times E_{X_T(t),\Theta_T(t),R}$.    In particular, we have that
\begin{equation}\label{eq:u_lower_bound}
\begin{split}
	\inf_{E_{X_T(T),\Theta_T(T),R/2}}u(t_\epsilon + T, \cdot, \cdot)
		&\geq \inf_{E_{X_T(T),\Theta_T(T),R/2}} \tilde v(t_\epsilon + T, \cdot,\cdot)\\
		&= \frac{M}{2} \inf_{E_{X_T(T),\Theta_T(T),R/2}} v(t_\epsilon + T,\cdot,\cdot)
		\geq \frac{M}{2} C_R.
\end{split}
\end{equation}
We emphasize here that $C_R$ depends only on $R$ and not on $M$.
At the expense of possibly increasing $T_{\epsilon,\delta}$, we can now specify $M= \frac{2\eps}{R C_R}$.  Using now~\eqref{eq:u_lower_bound}, we obtain
\[\begin{split}
\int_{\unth}^\infty u(t_\eps+T,X_T(T),\theta)d\theta
	&> \int_{\Theta_T(T) - R/2}^{\Theta_T(T)+R/2} u(t_\eps+T,X_T(T),\theta)d\theta\\
	&\geq \int_{\Theta_T(T) - R/2}^{\Theta_T(T)+R/2} M C_R d\theta
	= MC_RR = 2\eps.
\end{split}\] 
Hence we have $\rho(t_\eps + T,X_T(T)) > \eps$.  As we have
\[
X_T(T)>c_1(t_\eps+T)^{3/2},
\]  
this contradicts~\eqref{eq:rho_bound}, finishing the proof.~$\Box$

\section{Proofs of the auxiliary lemmas}\label{sec:trajectories}

In this section we prove the auxiliary results needed in the construction of the sub-solutions.
Some of them are quite standard, we present the proofs for the convenience of the reader.

\subsection{Existence of a sub-solution along trajectories -- the proof of
Lemma \ref{lem:trajectories}}

In this subsection we prove  Lemma \ref{lem:trajectories} by suitably re-scaling the equation and then using careful spectral estimates.  
Recall that our goal is to show that 
there exist constants $R_\eps$, $T_{\eps,\delta,\gamma}$, and $H_{\eps,\gamma}$ such that for all $R\geq R_\eps$, $T\geq T_{\eps,\delta,\gamma}$, 
and $H \geq H_{\eps,\gamma}$, there is a function $v$  which satisfies
\begin{equation}\label{eq:trajectories_subsolutionbis}
\begin{cases}
v_t - \theta v_{xx} - v_{\theta\theta} \leq (1-\eps) v, ~~~~~&\text{ for all $t>0$,  and $(x,\theta) \in E_{X_T(t),\Theta_T(t),R}$},\medskip\\
v(t,x,\theta) = 0, 		&\text{ for all  $0\le t\le T$ and } (x,\theta) \in \partial E_{X_T(t),\Theta_T(t),R},\medskip\\
v(0,x,\theta) < \delta, 		&\text{ for all } (x,\theta) \in E_{X_T(0),\Theta_T(0), R},\medskip\\
v(t,x,\theta) \leq 1,	&\text{ for all $0\le t\le T$ and } (x,\theta)\in E_{X_T(t),\Theta_T(t),R},
\end{cases}
\end{equation}
and  such that $\|v(T,\cdot,\cdot)\|_{L^\infty} \geq 1-\gamma$, and 
$v(T,x,\theta) \geq C_R$ for all  $ (x,\theta) \in E_{X_T(T),\Theta_T(T),R/2},
$
with a constant $C_R>0$ that depends only on $R$ and $\delta>0$.
To construct the desired sub-solution, we first go into the moving frame, and rescale the spatial variable:
\begin{equation}\label{eq:v}
v(t,x,\theta) = \tilde v\left( t, \frac{x - X_T}{\sqrt{\Theta_T}}, \theta - \Theta_T\right),~~~
y = \frac{x - X_T}{\sqrt{\Theta_T}}, ~~\text{ and }~~ \eta = \theta - \Theta_T.
\end{equation}
Then~\eqref{eq:trajectories_subsolutionbis} yields  
\begin{equation}\label{eq:v_inequality}
	\tilde v_t  - \left(\frac{y}{2} \frac{\dot\Theta_T}{\Theta_T} + \frac{\dot{X}_T}{\sqrt{\Theta_T}} \right)\tilde v_y  - \dot\Theta_T \tilde v_\eta \leq \left(1 + \frac{\eta}{\Theta_T}\right) \tilde v_{yy} + \tilde v_{\eta\eta} + (1 - \eps) \tilde v,
\end{equation}
with the boundary conditions  
\[
	\tilde v(t,y,\eta) = 0, ~~~~~~\text{ for all } (y,\eta) \in \partial B_R. 
\]
Here, $B_R\stackrel{\rm def}{=}B_R(0,0)$ is a ball of radius $R$ centered at $(y,\eta) = (0,0)$.   
As in~\cite{HNRR13,HNRR12}, the next step is to remove an exponential, setting, 
\[
w(t,y,\eta) = e^{ \frac{\dot{X}}{2\sqrt{\Theta_T}}y + \frac{\dot\Theta_T}{2} \eta} \tilde v(t,y,\eta).
\]
Note that if $T$ and $H$ are sufficiently large, there is a constant $M_R$, depending only on $R$, such that
\begin{equation}\label{eq:comparability}
	\frac{1}{M_R} w(t,y,\eta) \leq \tilde v(t,y,\eta) \leq M_R w(t,y,\eta)
\end{equation}
holds for all $t$, $y$, and $\eta$, because of the uniform bound (\ref{dec406}) on the Lagrangian. 
Changing variables in~\eqref{eq:v_inequality}, we see that $w$ must satisfy the inequality

\[\begin{split}
	w_t - &\underbrace{\Big(\frac{y \dot\Theta_T}{2\Theta_T} - \frac{\dot{X_T}}{\sqrt{\Theta_T}}\frac{\eta}{\Theta_T}\Big)}_{\stackrel{\text{def}}{=} A} w_y
		\leq \underbrace{\Big(1 + \frac{\eta}{\Theta_T}\Big)}_{\stackrel{\text{def}}{=} 1+D} w_{yy} + w_{\eta\eta}\\
			& + w\Big( \underbrace{1 -\eps- \frac{\dot{X}_T^2}{4\Theta_T} - \frac{\dot{\Theta}_T^2}{4} + 
			\Big( \frac{\ddot{X}_T}{2\sqrt{\Theta_T}} - \frac{\dot{X}_T\dot{\Theta}_T}{4\Theta_T^{\frac32}} \Big) y 
			+ \Big( \frac{\dot{X}_T^2}{4\Theta_T^2} + \frac{\ddot{\Theta}_T}{2} \Big) \eta }_{\stackrel{\text{def}}{=} G}\Big).
\end{split}
\label{eq:trajectories}
\]
Note that by choosing $T$ and $H$ large enough and using \eqref{dec406} and (\ref{dec410}), we may ensure that
\[
	\left|\Big( \frac{\ddot{X}_T}{2\sqrt{\Theta_T}} - \frac{\dot{X}_T\dot{\Theta}_T}{4\Theta_T^{\frac32}} \Big) y 
			+ \Big( \frac{\dot{X}_T^2}{4\Theta_T^2} + \frac{\ddot{\Theta}_T}{2} \Big) \eta\right| \leq \min\left\{\frac\epsilon4, \frac{1-\eps}{2} \right\}.
\]
Hence, using (\ref{dec406}), we have that
 \[
G\ge 1 -\eps- \frac{\dot{X}_T^2}{4\Theta_T} - \frac{\dot{\Theta}_T^2}{4}-\min\left\{\frac{\epsilon}{4},\frac{1-\eps}{2}\right\}
	\ge\min\left\{\frac{1-\epsilon}{2},\frac{3\eps}{4}\right\}.
\]
The two cases in the minimum correspond to the two cases in~\eqref{dec406}.
Hence,  if we construct $w$ which satisfies
\begin{equation}\label{eq:key_differential_inequality}
	w_t - A w_y \leq (1+D)w_{yy} + w_{\eta\eta} + \min\left\{\frac{1-\epsilon}{2},\frac{3\eps}{4}\right\}w.
\end{equation}
then $w$ also satisfies
\[
	w_t - A w_y \leq (1+D) w_{yy} + w_{\eta\eta} + G w.
\]
Returning to the original variables, $v$ would satisfy the desired differential inequality.  
With this in mind, we seek to construct $w$ satisfying~\eqref{eq:key_differential_inequality} which has the desired bounds.

We define $w$ using the principal Dirichlet eigenfunction of the operator
\begin{equation}\label{eq:L}
	L_{T,H}(t) \stackrel{\text{def}}{=} - A \partial_y - (1+D) \partial_{yy} - \partial_{\eta\eta}.
\end{equation}
To this end, for each $t \in [0,T]$, define $\phi_{T,H}(t,x,\theta)$ and $\lambda_{T,H}(t)$ to be the 
principal Dirichlet eigenfunction and eigenvalue of $L_{T,H}(t)$ (depending on $t$ as a parameter) in the ball $B_R$,
with the normalization 
\[
\|\phi_{T,H}\|_{L^2(B_R)} = 1.
\]
We state two lemmas regarding these quantities which will allow us to finish the proof.  First, 
we need to understand the behavior of $\lambda_{T,H}$ for $T$ and $H$ large. We recall the following result.
\begin{lemma}\label{lem:eigenvalue}
Fix $s > 1$.  Consider the operator
\[
	L_{a,b} = - \nabla \cdot a \nabla - b \cdot \nabla
,\]
defined on a smooth, bounded domain $\Omega \subset \R^d$ with $a,b\in C^{s}(\Omega)$ and where 
$a$ is a uniformly positive definite matrix.  Let $\lambda_{a,b,\Omega}$ be the principal Dirichlet eigenvalue of $L_{a,b}$ with eigenfunction $
\phi_{a,b,\Omega}$ having $L^\infty$-norm one.  Then $\lambda_{a,b,\Omega}$ and $\phi_{a,b,\Omega}$ are continuous in $a$ and $b$, when 
considered as maps from $(L^\infty)^{d^2}\times (L^\infty)^d$ to $\R$ and to $H^{s}$, respectively.
\end{lemma}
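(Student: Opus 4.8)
The plan is to prove \Cref{lem:eigenvalue} by the standard variational/perturbation theory for second-order elliptic operators, reduced to a self-adjoint problem via a change of unknown. First I would note that although $L_{a,b}$ is not self-adjoint, on a bounded smooth domain with $a$ uniformly elliptic and $a,b \in C^1(\Omega)$, the principal eigenvalue $\lambda_{a,b,\Omega}$ is real, simple, and isolated, with a principal eigenfunction $\phi_{a,b,\Omega}$ of one sign (say positive) in $\Omega$; this is the Krein--Rutman/Donsker--Varadhan theory for elliptic operators. The normalization $\|\phi_{a,b,\Omega}\|_{L^\infty} = 1$ then picks out a unique eigenfunction. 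The goal is to show the maps $(a,b) \mapsto \lambda_{a,b,\Omega}$ and $(a,b) \mapsto \phi_{a,b,\Omega}$ are continuous, from $(L^\infty)^{d^2}\times(L^\infty)^d$ into $\R$ and $H^{1+s}$ respectively.

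The key steps in order: (i) \emph{Stability of the eigenvalue.} Given a sequence $(a_n, b_n) \to (a,b)$ in $L^\infty$, the operators $L_{a_n,b_n}$ remain uniformly elliptic with uniformly bounded coefficients for $n$ large, so the principal eigenvalues $\lambda_n := \lambda_{a_n,b_n,\Omega}$ are bounded (e.g.\ by the min-max characterization of the principal eigenvalue, $\lambda_n = \sup\{\lambda : \exists \phi > 0 \text{ with } (L_{a_n,b_n} - \lambda)\phi \le 0\}$, combined with barriers). Extract a subsequence with $\lambda_n \to \bar\lambda$. (ii) \emph{Compactness of the eigenfunctions.} The corresponding $\phi_n$ satisfy $L_{a_n,b_n}\phi_n = \lambda_n \phi_n$ with $\|\phi_n\|_{L^\infty} = 1$; by interior and boundary elliptic $W^{2,p}$ estimates (valid since the coefficients are bounded and the top-order coefficient converges in $L^\infty$, hence one can still invoke Calder\'on--Zygmund-type estimates, or work with $H^1$ estimates if one only needs weak convergence), the $\phi_n$ are bounded in $W^{2,p}(\Omega)$ for every $p < \infty$, hence precompact in $C^{1,\gamma}(\overline\Omega)$ and in $H^{1+s}$ for $s < 1$. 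Passing to a further subsequence, $\phi_n \to \phi$ in $H^{1+s}$ and uniformly, so $\phi \ge 0$, $\|\phi\|_{L^\infty} = 1$ (in particular $\phi \not\equiv 0$), $\phi = 0$ on $\partial\Omega$, and in the distributional sense $L_{a,b}\phi = \bar\lambda \phi$. (iii) \emph{Identification of the limit.} By simplicity of the principal eigenvalue and positivity of $\phi$, it must be that $\bar\lambda = \lambda_{a,b,\Omega}$ and $\phi = \phi_{a,b,\Omega}$; since every subsequence has a sub-subsequence converging to this same limit, the full sequences converge. This gives continuity.

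I expect the main technical point to be step (ii): passing the top-order term $-\nabla\cdot a_n\nabla \phi_n$ to the limit when $a_n \to a$ only in $L^\infty$ (not, say, uniformly or in $C^1$). This is handled by writing the eigenvalue equation in divergence (weak) form, $\int_\Omega a_n \nabla\phi_n\cdot\nabla\zeta - \int_\Omega (b_n\cdot\nabla\phi_n)\zeta = \lambda_n \int_\Omega \phi_n \zeta$ for test functions $\zeta \in H^1_0(\Omega)$, using the uniform $H^1$ bound on $\phi_n$ (from testing against $\phi_n$ itself and using ellipticity plus the spectral bound) together with $a_n \to a$ strongly in $L^\infty$ and $\nabla\phi_n \rightharpoonup \nabla\phi$ weakly in $L^2$, so that $a_n\nabla\phi_n \rightharpoonup a\nabla\phi$ weakly in $L^2$. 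The regularity statement, that convergence holds in $H^{1+s}$, then follows from the $W^{2,p}$ (or $H^2$, via De Giorgi--Nash--Moser plus difference quotients) bound and compact Sobolev embedding, using that the right-hand side $\lambda_n\phi_n + b_n\cdot\nabla\phi_n$ is bounded in $L^p$. Uniqueness of the normalized principal eigenfunction and the fact that non-principal eigenfunctions change sign are what rule out convergence to a ``wrong'' eigenpair; these are standard consequences of the strong maximum principle and Krein--Rutman, which I would cite rather than reprove. I would remark that this lemma is classical (e.g.\ it appears in the homogenization and KPP literature) and include the short argument only for completeness.
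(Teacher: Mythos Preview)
Your proposal is correct and follows the standard compactness/Krein--Rutman route. The paper itself omits the proof entirely, stating only that it ``is rather standard and we omit it,'' so there is no alternative approach to compare against; your sketch simply fills in what the authors chose not to write.
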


\noindent The proof of Lemma~\ref{lem:eigenvalue} will be presented in \cref{sec:spectrum} below.

\Cref{lem:eigenvalue} implies that, as $T$ and $H$ tend to infinity, $\lambda_{T,H}(t)$ becomes bounded above and below by a constant multiple 
of $R^{-2}$ since the principal eigenvalue of $-\Delta$ on $B_R$ equals $c_1/R^2$ for some positive constant $c_1$.  This convergence 
is uniform in $t$.  Hence, choosing first $R$ sufficiently large, we may choose $H$ and $T$, depending only on 
$\eps$, $R$, the convergence rate of the limit in \eqref{dec410}, such that 
\begin{equation}\label{eq:small_eigenvalue}
	\lambda_{T,H}(t) < \frac{1}{4}\min\left\{\frac{1-\epsilon}{2},\frac\eps4\right\},~~~~\text{ for all } t.
\end{equation} 
We will also need   the behavior of the time derivative of $\phi_{T,H}$.
\begin{lemma}\label{lem:time_derivative}
Using the notation above, $\partial_t \phi_{T,H}$ is a smooth function in $y$ and $\eta$, and
\[
	\lim_{T,H \to \infty} \left\| \frac{\partial_t \phi_{T,H}}{\phi_{T,H}}\right\|_{L^\infty} = 0.
\]
\end{lemma}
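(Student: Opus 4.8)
The plan is to obtain $\partial_t\phi_{T,H}$ by differentiating the eigenvalue equation $L_{T,H}(t)\phi_{T,H} = \lambda_{T,H}(t)\phi_{T,H}$ in $t$, treating $t$ as a parameter, and then to estimate the resulting inhomogeneous elliptic problem. Write $\phi = \phi_{T,H}$, $\lambda = \lambda_{T,H}$, $L = L_{T,H}(t)$, and denote $\dot{}=\partial_t$. Differentiating gives
\begin{equation*}
(L - \lambda)\dot\phi = \dot\lambda\,\phi - (\dot L)\phi,
\end{equation*}
where $\dot L = -(\dot A)\partial_y - (\dot D)\partial_{yy}$ is a first-plus-second-order operator whose coefficients $\dot A$ and $\dot D$ are explicit in terms of $X_T,\Theta_T$ and their first three time derivatives. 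From the formulas for $A$ and $D$ one checks that $\dot A$ and $\dot D$ are built out of the quantities $\dot\Theta_T/\Theta_T$, $\ddot\Theta_T/\Theta_T$, $\dot X_T/\sqrt{\Theta_T}$, $\ddot X_T/\sqrt{\Theta_T}$, $\dddot\Theta_T$, etc.; invoking the hypotheses~\eqref{dec404}, \eqref{dec406}, \eqref{dec410} (and the fact that $\Theta_T \geq H \to\infty$) together with the smoothness of the trajectories, one gets $\|\dot A\|_{C^0(B_R)} + \|\dot D\|_{C^0(B_R)} \to 0$ as $T,H\to\infty$, uniformly in $t$. Here I would note that strictly speaking~\eqref{dec410} controls second derivatives, but on the specific trajectories~\eqref{dec708} and~\eqref{dec1404} used in the applications, all the relevant combinations involving third derivatives also tend to zero; alternatively one adds the decay of these combinations to the list of standing assumptions, which is harmless.

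Next I would control $\dot\lambda$. Differentiating the normalization $\|\phi\|_{L^\infty}=1$ is awkward, so instead pair the differentiated equation with $\phi$ against the principal eigenfunction $\phi^*$ of the adjoint operator $L^*$ (also Dirichlet on $B_R$, with the same eigenvalue $\lambda$): since $(L-\lambda)\dot\phi$ is orthogonal to $\phi^*$, we get $\dot\lambda \int \phi\phi^* = \int (\dot L\phi)\phi^*$. The denominator $\int_{B_R}\phi\phi^*$ is bounded below by a positive constant depending only on $R$ — this follows from Lemma~\ref{lem:eigenvalue}, which gives convergence of $\phi,\phi^*$ in $H^{1+s}$ to the (positive, $L^\infty$-normalized) principal eigenfunction of $-\Delta$ on $B_R$ as $T,H\to\infty$, so the overlap integral stays uniformly positive. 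Hence $|\dot\lambda| \leq C_R(\|\dot A\|_{C^0}+\|\dot D\|_{C^0})(\|\phi\|_{H^1}\|\phi^*\|_{H^1} + \|\phi\|_{H^2}\|\phi^*\|_{L^2}) \to 0$, where again the $H^1$ and $H^2$ bounds on $\phi,\phi^*$ are uniform in $T,H$ by elliptic regularity on the fixed smooth domain $B_R$ with coefficients bounded in $C^1$ (uniformly, using~\eqref{dec406}, \eqref{dec410}).

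With the right-hand side $f\stackrel{\rm def}{=}\dot\lambda\,\phi - (\dot L)\phi$ now known to satisfy $\|f\|_{L^2(B_R)} \to 0$ as $T,H\to\infty$ uniformly in $t$, I solve for $\dot\phi$. The operator $L-\lambda$ is not invertible on all of $L^2$, but $f$ lies in the range (it is orthogonal to $\phi^*$), so there is a unique solution $\dot\phi$ in the complement of $\Span\{\phi\}$, fixed by the constraint coming from differentiating the normalization. Elliptic regularity on $B_R$ — together with the Fredholm estimate $\|\dot\phi\|_{H^2} \leq C_R\|f\|_{L^2}$ on the complement of the kernel, with $C_R$ uniform because the coefficients of $L$ and the spectral gap are uniformly controlled — gives $\|\dot\phi\|_{H^2(B_R)}\to 0$, and then Sobolev embedding (bootstrapping once more to get a $C^{2,s}$ bound, hence smoothness, from Schauder estimates since the coefficients are $C^{1,s}$) gives $\|\dot\phi\|_{L^\infty(B_R)} \to 0$. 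Finally, since $\phi = \phi_{T,H}$ converges in $C^0(B_R)$ to a strictly positive function and on any fixed compact subdomain it is bounded below by a positive constant, while near $\partial B_R$ the Hopf lemma gives $\phi(y,\eta) \gtrsim \mathrm{dist}((y,\eta),\partial B_R)$ uniformly, one compares $\dot\phi$ to $\phi$ by a barrier argument near the boundary (both vanish on $\partial B_R$, and $|\dot\phi|$ satisfies an elliptic inequality with small forcing) to conclude $\|\dot\phi/\phi\|_{L^\infty(B_R)}\to 0$, which is the claim. The main obstacle is this last point — extracting a uniform bound on the \emph{ratio} $\dot\phi/\phi$ up to the boundary, where both vanish — which is handled by combining the uniform Hopf lower bound on $\phi$ with a maximum-principle barrier comparison for $\dot\phi$ against a suitable multiple of $\phi$.
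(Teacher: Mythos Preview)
Your proposal is correct and follows the same architecture as the paper: differentiate the eigenvalue equation in $t$, show the coefficient perturbations $\dot A,\dot D$ vanish as $T,H\to\infty$, control $\dot\lambda$, solve the resulting inhomogeneous problem for $\dot\phi$, bootstrap to $C^1$, and finish with a Hopf-type comparison at the boundary. The differences are in the intermediate tools. For $\dot\lambda$, the paper does not pair against the adjoint eigenfunction but instead uses the min--max characterization $\lambda(t)=\sup_{\psi>0}\inf L(t)\psi/\psi$, testing with $\psi=\phi_{T,H}(t)+hp_h$ where $p_h$ solves an auxiliary elliptic problem; this yields Lipschitz continuity of $\lambda$ with Lipschitz constant $\to 0$, essentially the same output as your argument but via the comparison principle rather than Fredholm duality. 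For $\dot\phi\to 0$ in $L^2$, the paper argues by contradiction: normalize $\psi_{T,H}=\dot\phi/\|\dot\phi\|_{L^2}$, pass to a limit solving $(-\Delta-\lambda_{B_R})\psi=0$, and use simplicity of $\lambda_{B_R}$ together with $\int\psi\phi_{B_R}=0$ (from $\partial_t\|\phi\|_{L^2}^2=0$) to reach a contradiction --- this replaces your direct Fredholm estimate on the complement of the kernel. Your route is slightly more streamlined; the paper's route avoids invoking a uniform spectral-gap/Fredholm bound for the non-self-adjoint family. One small remark: your caveat about third derivatives of the trajectories is unnecessary, since $\dot D=-\eta\dot\Theta_T/\Theta_T^2$ and $\dot A$ involve only $\dot\Theta_T,\ddot\Theta_T,\dot X_T,\ddot X_T$ together with inverse powers of $\Theta_T\ge H$, so assumptions~\eqref{dec406} and~\eqref{dec410} already suffice.
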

\Cref{lem:time_derivative} implies that for fixed $R$, we may choose $T$ and $H$, depending only on $\eps$ and the convergence rate of the limit in \eqref{dec410}, such that
\begin{equation}\label{eq:small_derivative}
	|\partial_t\phi_{T,H}| \leq \frac{1}{4}\min\left\{\frac{1-\epsilon}{2},\frac\eps4\right\} \phi_{T,H}, ~~~~ \text{ for all } (t,y,\eta) \in [0,T]\times B_R.
\end{equation}

With this set-up, we can now conclude the proof of Lemma~\ref{lem:trajectories}.  We define
\[ 
w_{T,H} \stackrel{\rm def}{=} \delta e^{r t} \frac{\phi_{T,H}}{N} ,
\]
with
\[
r \stackrel{\rm def}{=} -\frac{1}{T} \log\left(\delta\right)
~~~\text{ and }~~~
N \stackrel{\rm def}{=} \sup_{t\in[0,T]}\|\phi_{T,H}(t)\|_\infty.
\]
Fix $T$ large enough, depending on $\eps$ and $\delta$, such that 
\[
	r < \frac{1}{4}\min\left\{\frac{1-\epsilon}{2},\frac\eps4\right\}.
\]
Then, \eqref{eq:small_eigenvalue} and \eqref{eq:small_derivative} and our choice of $r$ imply that
\[\begin{split}
	\partial_t w_{T,H} - L_{T,H} w_{T,H}
		&= \partial_t w_{T,H} + \lambda_{T,H} w_{T,H}\\
		&\leq \frac{\partial_t \phi_{T,H}}{\phi_{T,H}}w_{T,H} + r w_{T,H} + \lambda_{T,H}w_{T,H}
		\leq \frac{3}{4}\min\left\{\frac{1-\epsilon}{2},\frac\eps4\right\}w_{T,H},
\end{split}\]
and
$w_{T,H}$ is a sub-solution to~\eqref{eq:trajectories_subsolution}.  In addition, by construction, we know 
 that $w_{T,H}(0,y,\eta) \leq \delta$ for all $(y,\eta)\in B_R$,
and $w_{T,H}(t,y,\eta) \leq 1$ for all $t\in [0,T]$ and $(y,\eta)\in B_R$.

Finally, due to the uniform convergence of $\phi_{T,H}$ to the principal Dirichlet eigenfunction of $B_R$, if $T$ and $H$ are sufficiently large, there exists $c_R$ such that
\[
	 c_R \leq \min_{(y,\eta) \in B_{R/2}} \frac{\phi_{T,H}(y,\eta)}{N}
		= \min_{(y,\eta)\in B_{R/2}} w_{T,H}(T,y,\eta).
\]
In view of~\eqref{eq:comparability} and by undoing the change of variables, this implies
\[
	 c_R \leq \min_{(y,\eta) \in B_{R/2}} w_{T,H}(T,y,\eta)
		\leq M_R \min_{(x,\theta)\in E_{X_T(T),\Theta_T(T),R/2}} v(T,x,\theta),
\]
which finishes the proof of the lower bound on $\tilde v$.

The fact that $\|\tilde v \|_\infty \geq 1 - \gamma$ follows by noting the following:  $\phi_{T,H}/N$ converges to the Dirichlet eigenfunction with $L^\infty$ norm 1, which takes a maximum of $1$ at $(y,\eta) = (0,0)$.  Choosing $T$ and $H$ large enough, we have that $\phi_{T,H}(T,0,0)/N \geq 1 - \gamma$.  Using the definition of $w_{T,H}$ finishes the proof.~$\Box$

\subsection{The spectral problem -- the proofs of \Cref{lem:eigenvalue} and \Cref{lem:time_derivative}}\label{sec:spectrum}

\subsubsection*{The proof of \Cref{lem:eigenvalue}.}

To simplify the notation, we show continuity 
at $\lambda_{I,0,\Omega}$ and $\phi_{I,0,\Omega}$, but the same argument works 
for all matrices $a$ and vector fields $b$.  Fix any sequence $a_n$ and $b_n$ which tend to $I$ and $0$, respectively.  Let $\phi_n$ and $
\lambda_n$ be the principal eigenfunction and eigenvalue
of the operator $L_n := L_{a_n, b_n, \Omega}$, with the normalization $\Vert \phi_n \Vert_{L^2(\Omega)}= 1$.

First, we show that $\lambda_n$ is bounded.  Suppose by contradiction that (up to a subsequence that we omit)
$\lambda_n$ tends to infinity, and write
\[
\frac{1}{\lambda_n}L_n \phi_n =  \phi_n.
\]
Multiplying by a test function $\eta\in C_0^\infty(\Omega)$, integrating over $\Omega$ and 
passing to the limit $n\to+\infty$ we see that $\phi_n$ converges weakly to zero as $n\to+\infty$.
and the right side is bounded in $L^2$ uniformly in $n$, then elliptic regularity implies that $\tilde \phi_n$ is bounded uniformly in $H^2$.  Let $\tilde\phi_\infty$ be the strong $H^1$ limit of $\tilde \phi_n$, taking a subsequence if necessary.  Multiplying the equation for $\tilde\phi_n$ by $\tilde \phi_n$ and integrating, we see that
\[
	\int_\Omega \nabla\tilde \phi_n \cdot a_n \nabla \tilde\phi_n  \, dx + \int_\Omega \tilde\phi b_n \cdot \nabla \tilde\phi_n \, dx = 1.
\]
Taking the limit as $n$ tends to infinity yields
\[
	\Vert \nabla \phi_n \Vert_{L^2(\Omega)} = 1.
\]
On the other hand, since $\Vert \phi_n \Vert_{L^2(\Omega)}= 1$ and $\lambda_n$ tends to infinity, it is clear that $\tilde\phi_\infty = 0$.  This is a contradiction.

Since $\lambda_n$ is bounded uniformly, then $\phi_n$ is bounded uniformly in~$H^2$.  
Hence, we may, taking a subsequence if necessary, find a strong $H^{1+s}$ limit of $\phi_n$, $\phi_\infty$, and a limit of $\lambda_n$, $\lambda_\infty$.  Since $a_n$ and $b_n$ converge in $L^\infty$, then we see that $\phi_\infty$ must satisfy
\[
	-\Delta \phi_\infty = \lambda_\infty \phi_\infty.
\]
The maximum principle and the fact that $\phi_n > 0$ imply that $\phi_\infty > 0$.  Hence $\phi_\infty$ must be the principal eigenfunction of $-\Delta$ and $\lambda_\infty$ must be the principal eigenvalue.

We have shown that every sequence $\phi_n$ and $\lambda_n$ has a subsequence which must converge to the $\phi_{I,0,\Omega}$ and $\lambda_{I,0,\Omega}$, respectively.  This implies that $\phi_{a,b,\Omega}$ and $\lambda_{a,b,\Omega}$ are continuous at $a = I$ and $b = 0$, finishing the proof.
$\Box$

\subsubsection*{The proof of \Cref{lem:time_derivative}.}

First we show that $\partial_t \phi_{T,H}$ is well-defined.  To do this, we need only show that $\lambda_{T,H}(t)$ is Lipschitz continuous in $t$.  Indeed, with this shown, if $\lambda_{T,H}(t)$ is in $W^{1,\infty}$ as a function of $t$, we may take a derivative in $t$ of 
the equation for $\phi_{T,H}$, allowing us to write down an equation for $\partial_t \phi_{T,H}$, showing that $\partial_t \phi_{T,H}$ is well-defined.

Before we begin, we recall that we may characterize $\lambda_{T,H}$ as
\[
	\lambda_{T,H}(t) = \sup_{0<\psi \in C^2_0(B_R)} \inf_{(y,\eta)\in B_R} \frac{L_{T,H}(t) \psi(y,\eta)}{\psi(y,\eta)} = \inf_{0<\psi \in C^2_0(B_R)} \sup_{(y,\eta) \in B_R} \frac{L_{T,H}^*(t) \psi(y,\eta)}{\psi(y,\eta)}.
\]
Using this, we now estimate $\lambda_{T,H}(t+h) - \lambda_{T,H}(t)$ from below -- the upper bound may be found similarly.  
For any $h$ small enough and $p_h$ to be determined, we have that, as long as $\phi_{T,H} + h p_h > 0$,
\begin{equation}\label{eq:minimax_bound}
\begin{split}
	\lambda_{T,H}(t+h)
		&\geq \inf_{(y,\eta)} \left( \frac{L_{T,H}(t+h) \phi_{T,H}(t) + h L_{T,H}(t+h) p_h}{\phi_{T,H}(t) + h p_h}\right)\\
		&= \inf_{(y,\eta)} \left( \frac{\lambda_{T,H}(t)\phi_{T,H}(t)}{\phi_{T,H}(t) + hp_h} + \frac{(L_{T,H}(t+h) - L_{T,H}(t)) \phi_{T,H}(t) + h L_{T,H}(t+h) p_h}{\phi_{T,H}(t) + hp_h}\right).
\end{split}
\end{equation}
This suggests that we define $p_h$ as the unique solution of
\begin{equation}\label{eq:ph}
\begin{split}
&L_{T,H}(t+h) p_h = -\frac{L_{T,H}(t+h)-L_{T,H}(t)}{h} \phi_{T,H}(t),\hbox{ in $B_R$}\\
&	p_h|_{\partial B_R} = 0.
\end{split}
\end{equation}
With the following lemma, we may choose $h$ small enough that $\psi + h p_h>0$, and $\psi+hp_h$ is admissible in the formula above~\eqref{eq:minimax_bound}.
\begin{lem}\label{lem:p_h}
	For $h$ sufficiently small, $\|p_h / \phi_{T,H}(t)\|_\infty \leq C$.  Further, $\|p_h/\phi_{T,H}\|_\infty \to 0$ as $T,H \to \infty$ uniformly in $t$ and $h$.
\end{lem}

Returning to \eqref{eq:minimax_bound} and using our choice of $p_h$, we obtain the inequality:
\[
	\lambda_{T,H}(t+h)
		\geq \lambda_{T,H}(t) \inf_x\left(\frac{1}{1 + h \frac{p_h}{\phi_{T,H}(t)}} \right)
		\geq \lambda_{T,H}(t) \left(1 - C \left\|\frac{p_h}{\phi_{T,H}(t)}\right\|_{L^\infty}  h\right),
\]
with a universal constant $C$. 
Since $\lambda_{T,H}(t)$ is bounded independently of $t$, $T$, and $H$ by a constant which we may denote by $M$, we arrive at
\[
	\lambda_{T,H}(t+h) - \lambda_{T,H}(t)
		\geq - \lambda_{T,H}(t) C \left\|\frac{p_h}{\phi_{T,H}(t)}\right\|_{L^\infty} h
		\geq - CM\left\|\frac{p_h}{\phi_{T,H}(t)}\right\|_{L^\infty} h.
\]
Hence, $\lambda_{T,H}(t)$ is Lipschitz and its Lipschitz bound is linear in $\|p_h/\phi_{T,H}(t)\|_{L^\infty}$.

Since we may easily obtain a similar upper bound, we obtain
\[
	\left| \partial_t \lambda_{T,H}(t)\right| \leq CM \left\|\frac{p_h}{\phi_{T,H}(t)}\right\|_{L^\infty}.
\]
Using \Cref{lem:p_h} and the above bound implies that $\partial_t \lambda_{T,H}(t)$ tends to zero in $L^\infty([0,T])$ as $T$ and $H$ tend to infinity.

We now show that $\partial_t \phi_{T,H}$ tends uniformly to zero in $L^2$. 
  
We argue by contradiction; assume that $\|\partial_t \phi_{T,H}\|_2$ is bounded from below, passing to a subsequence if necessary.  Let us define
\[
	\psi_{T,H} \stackrel{\rm def}{=} \frac{\partial_t \phi_{T,H}}{\|\partial_t\phi_{T,H}\|_{L^2}}.
\]
Taking the $t$ derivative of the equation for $\phi_{T,H}$ yields the equation
\begin{equation}\label{eq:time_derivative_pde}
	L_{T,H} \psi_{T,H}
		-\lambda_{T,H} \psi_{T,H}= \frac{(\partial_t\lambda_{T,H}) \phi_{T,H} + (\partial_t A) \partial_y\phi_{T,H} +  (\partial_tD) \partial_{yy}\phi_{T,H}}{\|\partial_t\phi_{T,H}\|_{L^2}}.
\end{equation}
The explicit forms of $A$ and $D$ and the fact that $\partial_t \lambda_{T,H}$ shows that,  if 
$\|\partial_t \phi_{T,H}\|_{L^2}$ is bounded uniformly below, then the right hand side of this equality tends uniformly to zero in 
$L^2$ as $T$ and~$H$ tend to infinity.  In addition, calling $\lambda_{B_R}$ the principal Dirichlet eigenvalue of $-\Delta$ on $B_R$, we have that $\lambda_{T,H}$ converges to $\lambda_{B_R}$ by \Cref{lem:eigenvalue}.

By elliptic regularity, see~e.g.~\cite[Theorem~8.13]{GilbargTrudinger}, $\psi_{T,H}$ is uniformly bounded in $H^2$ since it has $L^2$ norm $1$. Up to taking a subsequence if necessary, 
we see that $\psi_{T,H}$ converges to some function $\psi$ with $\|\psi\|_{L^2} = 1$ that, owing to~\eqref{eq:time_derivative_pde}, solves
\begin{equation}\label{eq:eigenfunction_limit}
	- \Delta \psi - \lambda_{B_R} \psi = 0.
\end{equation}
Let $\phi_{B_R}$ be the $L^2$-normalized Dirichlet eigenfunction  
corresponding to $\lambda_{B_R}$.  By \Cref{lem:eigenvalue}, it follows that~$\phi_{T,H}$ converges to $\phi_{B_R}$.  Since $\lambda_{B_R}$ is a principal eigenvalue, it is simple, see e.g.~\cite[Chapter~VIII]{DautrayLions}.  Hence,

$\psi = a \phi_{B_R}$.  Note that $|a| = 1$ since $\|\psi\|_{L^2} = 1$.  On the other hand, we have that
\[
	a=\int_{B_R} \psi \phi_{B_R} dyd\eta
		= \lim_{T,H\to\infty} \int_{B_R} \left(\frac{\partial_t \phi_{T,H}}{\|\partial_t\phi_{T,H}\|_{L^2}}\right) \phi_{T,H} dyd\eta
		= \lim_{T,H\to\infty} \frac{1}{2\|\partial_t\phi_{T,H}\|_{L^2}} \partial_t \|\phi_{T,H}\|^2
		= 0.
\]
The last equality holds since   $\|\phi_{T,H}\|_{L^2}=1$ for all $t$.  
This contradicts the fact that $|a| = 1$.
Hence, it must be that $\|\partial_t \phi_{T,H}\|_{L^2}$ tends to zero uniformly as $T$ and $H$ tend to infinity.

Knowing that $\partial_t \phi_{T,H}$ and $\partial_t \lambda_{T,H}$ tend to zero, we may now conclude.  First, we note that
\[
	L_{T,H} (\partial_t\phi_{T,H})
		-\lambda_{T,H} (\partial_t\phi_{T,H})= (\partial_t\lambda_{T,H}) \phi_{T,H} + (\partial_t A) \partial_y\phi_{T,H} + (\partial_t D) \partial_{yy} \phi_{T,H}.
\]
Since the right side tends to zero in $H^2$ and since $\partial_t \phi_{T,H}$ tends to zero in $L^2$, it follows that $\partial_t \phi_{T,H}$ tends to zero in $H^4$, see e.g.~\cite[Theorem~8.13]{GilbargTrudinger}.  Using Morrey's inequality~\cite[Theorem~7.22]{GilbargTrudinger}, we may strengthen this to show that $\partial_t \phi_{T,H}$ converges to zero in $C^1(B_R)$ uniformly in $t$.  On the other hand, by \Cref{lem:eigenvalue} $\phi_{T,H}$ converges in $H^4$ to $\phi_{B_R}$.  Again, using Morrey's inequality, we have that $\phi_{T,H}$ converge to $\phi_{B_R}$ in $C^1(B_R)$ uniformly in $t$.

It follows from this $C^1$ convergence that when $T$ and $H$ are sufficiently large, $\phi_{T,H}$ is uniformly positive 
for any compact subset of $B_R$ and $\partial_n \phi_{T,H}$ is uniformly negative, where $n$ is the outward  
unit normal of $\partial B_R$ since $\phi_{B_R}$ has these properties.
On the other hand $\partial_t \phi_{T,H}$ converges uniformly to zero on $B_R$ 
and $\partial_n (\partial_t \phi_{T,H})$ converges uniformly to zero on $\partial B_R$.  This yields
\[
	\lim_{T,H \to \infty} \left\|\frac{\partial_t \phi_{T,H}}{\phi_{T,H}}\right\|_{L^\infty} = 0,
\]
which finishes the proof.~$\Box$

In order to conclude, we must now prove \Cref{lem:p_h}.  We do that here.
\begin{proof}[{\bf Proof of \Cref{lem:p_h}}]
Since $\|\phi_{T,H}(t)\|_{L^2(B_R)} = 1$ and since the coefficients of $L_{T,H}$ are bounded in $C^2$, then estimates as in, e.g.~\cite[Theorem~8.10 and Theorem~8.13]{GilbargTrudinger}, imply that $\|\phi_{T,H}(t)\|_{H^4(B_R)} \leq C_R$, for a constant independent of $T$ and $H$.  

As the coefficients in $L_{T,H}(t)$ are have two bounded derivatives in $y$ and $\eta$, we may apply these same estimates to obtain that 
\[\begin{split}
	\|p_h\|_{H^4(B_R)}
		&\leq C\left( \|p_h\|_{L^2(B_R)} + \left\|\frac{L_{T,H}(t+h)-L_{T,H}(t)}{h} \phi_{T,H}(t)\right\|_{H^2(B_R)}\right)\\
		&\leq C\left( \|p_h\|_{L^2} + \epsilon_{T,H}C_R\right),
\end{split}\]
where, due to the fact that $\partial_tD$ and $\partial_tA$ tend to zero as $T$ and $H$ tend to infinity, $\epsilon_{T,H}$ is a constant which tends to zero as $T$ and $H$ tend to infinity.  Of course, using the definitions of $p_h$ and $L_{T,H}$ and integrating $L_{T,H}(t+h)p_h$ by parts yields
\[\begin{split}
	-\int p_h A \nabla p_h dyd\eta + (1 - \|D\|_\infty) \int |\nabla p_h|^2dyd\eta
		&\leq \int p_h L_{T,H}(t)p_h dyd\eta\\
		&\leq \left\|\frac{L_{T,H}(t+h)-L_{T,H}(t)}{h} \phi_{T,H}(t)\right\|_{L^2} \|p_h\|_{L^2}.
\end{split}\]
Using that $A$ tends to zero as $T$ and $H$ tend to infinity and that $\|p_h\|_{L^2} \leq C_R \|\nabla p_h\|_{L^2}$, we obtain
\[
	\|\nabla p_h\|_{L^2}
		\leq C_R \epsilon_{T,H}.
\]
Here $\epsilon_{T,H}$ is as above.  By the Poincar\'e inequality and our $H^4$ bound of $p_h$, we obtain
\[
		\|p_h\|_{H^4}
			\leq C_R \epsilon_{T,H}.
\]
Hence, $p_h$ is bounded in $H^4$ uniformly in $h$.  By Morrey's inequality, see e.g.~\cite[Theorem~7.22]{GilbargTrudinger}, $p_h$ and $\phi_{T,H}(t)$ are bounded in $C^{2,\alpha}$ for some $\alpha >0$.  Moreover, we see that the $p_h$ tends to zero in $C^{2,\alpha}$ as $T$ and $H$ tend to infinity.

We now suppose that $p_h(y_n,\eta_n)/\phi_{T,H}(t_n, y_n, \eta_n)$ as $n$ tends to infinity for some sequences $h_n \to 0$, $T_n$, $H_n$, and $(t_n,y_n,\eta_n)$.
Since $p_h$ and $\phi_{T,H}(t)$ are bounded in $C^{2,\alpha}$, we may define
\[
	p_0(y,\eta) = \lim_{n\to\infty} p_{h_n}(y,\eta),
	~~~\text{ and }~~~
	\phi(t, y,\eta)
		= \lim_{n\to\infty} \phi_{T_n,H_n}(t+t_n, y, \eta).
\]
In addition, using that $B_R$ is compact, we let $y_n \to y_\infty$ and $\eta_n \to \eta_\infty$.  Since $\|\phi_{T,H}(t)\|_{L^2(B_R)}=1$, then by compactness, $\|\phi\|_{L^\infty(B_R)} = 1$.

Since $p_h$ is bounded in $C^{2,\alpha}$, we must have that either $\phi(y_\infty,\eta_\infty) = 0$ or that $(y_\infty,\eta_\infty)\in\partial B_R$ and the normal derivative of $\phi$ is zero.  The first violates the maximum principle and the second violates the Hopf maximum principle.  This is a contradiction and thus $p_h / \phi_{T,H}$ is bounded uniformly above.

To see that $p_h / \phi_{T,H}$ tends uniformly to zero.  We may argue exactly as we did to show that $\partial_t \phi_{T,H}/\phi_{T,H}$ tends to zero, using the fact that $p_h$ tends to zero in $C^1$.  This concludes the proof.
\end{proof}

\bibliography{refs}
\bibliographystyle{plain}

\end{document}